\documentclass[11pt,reqno]{amsart}

\usepackage{amsmath,amssymb,amsfonts,amsthm}
\usepackage{hyperref}
\usepackage{mathrsfs}
\usepackage{enumitem}
\usepackage{geometry}
\hypersetup{
	colorlinks=true,
	linkcolor=blue,
	citecolor=blue,
	urlcolor=blue
}

\newtheorem{theorem}{Theorem}[section]
\newtheorem{lemma}[theorem]{Lemma}
\newtheorem{proposition}[theorem]{Proposition}
\newtheorem{corollary}[theorem]{Corollary}
\theoremstyle{definition}
\newtheorem{definition}[theorem]{Definition}

\theoremstyle{remark}

\title{Relative Finite Energy Classes for Complex Hessian Equations with Prescribed Singularities}

\author{Truong Dinh Dat}

\address{Nguyen Trai University, Hanoi, Viet Nam}

\email{truongdinhdat14081994qb@gmail.com}

\begin{document}
	\begin{abstract}
		
		Let $\Omega\subset \mathbb C^n$ be a bounded $m$-hyperconvex domain and
		let
		
		\[
		\psi\in SH_m(\Omega)
		\]
		
		be a fixed negative $m$-subharmonic function.
		In this paper we introduce a relative finite energy class
		
		\[
		\mathcal E_{m,\psi}(\Omega),
		\]
		
		which may be viewed as a Hessian analogue of the relative energy
		classes appearing in the pluripotential theory of complex
		Monge--Amp\`ere equations.
		
		We develop a systematic pluripotential theory in this setting.
		More precisely, we introduce a relative Hessian capacity associated
		with the prescribed singularity type $\psi$, construct relative mixed
		Hessian products, and establish their fundamental properties.
		We prove a monotone convergence theorem and a Bedford--Taylor type
		continuity theorem for Hessian measures in the class
		\(\mathcal E_{m,\psi}(\Omega)\).
		
		A central result of the paper is a relative comparison principle,
		which yields uniqueness of solutions to complex Hessian equations with
		prescribed singularities.
		As an application, we establish an existence and uniqueness theorem
		for the equation
		
		\[
		(dd^c u)^m\wedge\beta^{n-m}
		=
		\mu
		\]
		
		for a large class of positive Radon measures that do not charge
		$m$-polar sets.
		
		The results obtained here provide a relative finite energy framework
		for complex Hessian equations and extend several fundamental aspects
		of Cegrell's theory to the setting of prescribed singularity types.
		
	\end{abstract}

	\keywords{Complex Hessian equations; relative pluripotential theory;
		finite energy classes; prescribed singularities; relative singularities;
		Hessian measures; $m$-subharmonic functions.}
	
	\subjclass[2020]{Primary 32W20; Secondary 32U05}

	\maketitle
	
	%%%%%%%%%%%%%%%%%%%%%%%%%%%%%%%%%%%%%%%%%%%%%%%%%%%%%%

	%%%%%%%%%%%%%%%%%%%%%%%%%%%%%%%%%%%%%%%%%%%%%%%%%%%%%%
	%%%%%%%%%%%%%%%%%%%%%%%%%%%%%%%%%%%%%%%%%%%%%%%%%%%%%%
	\section{Introduction}
	
	Let $\Omega\subset \mathbb C^n$ be a bounded domain and let
	$1\le m\le n$.
	The complex Hessian operator is defined by
	
	\[
	H_m(u)
	:=
	(dd^c u)^m\wedge\beta^{n-m},
	\]
	
	where
	
	\[
	\beta:=dd^c|z|^2
	\]
	
	is the standard K\"ahler form on $\mathbb C^n$.
	The corresponding Hessian equation
	
	\[
	H_m(u)=\mu
	\]
	
	interpolates naturally between the classical Laplace equation
	(\(m=1\)) and the complex Monge--Amp\`ere equation (\(m=n\)). The real Hessian counterpart was extensively developed by
	Trudinger and Wang \cite{TrudingerWang02}, whose work revealed deep
	connections between Hessian operators, capacity theory, and fully
	nonlinear elliptic equations.
	During the last two decades, complex Hessian equations have become a
	central topic in pluripotential theory and nonlinear complex analysis.
	
	The study of Hessian equations was initiated by B{\l}ocki \cite{Blocki05}, who
	introduced the class of $m$-subharmonic functions and established
	fundamental properties of Hessian operators.
	Subsequently, Dinew and collaborators developed a Bedford--Taylor
	type pluripotential theory for Hessian measures and obtained
	comparison principles, mixed Hessian inequalities, and uniqueness
	results \cite{Dinew09,DinewKolodziej}.
	These developments provided the pluripotential framework necessary for
	the study of degenerate Hessian equations with measure data. Finite energy methods have also been successfully applied to complex
	Hessian equations. In particular, Lu \cite{Lu12} introduced a
	variational approach to complex Hessian equations in $\mathbb C^n$,
	which provided a powerful tool for solving Hessian equations in
	finite-energy settings.
	
	Subsequently, several authors established existence, stability, and
	regularity results for Hessian equations in various energy classes,
	both in the local and compact settings
	\cite{DinewKolodziej,LuNguyen18}.
	
	In parallel, the theory of complex Monge--Amp\`ere equations
	underwent a major breakthrough through the work of Cegrell
	\cite{Cegrell98,Cegrell04}.
	The introduction of finite energy classes made it possible to solve
	highly degenerate Monge--Amp\`ere equations beyond the bounded
	setting. The solvability of degenerate complex Monge--Amp\`ere equations with
	measure data was established by Ko{\l}odziej
	\cite{Kolodziej98,Kolodziej05}, providing one of the foundations of
	modern pluripotential theory.
	
	Later, Guedj and Zeriahi developed a systematic pluripotential
	theory of finite energy classes and singular Monge--Amp\`ere
	equations on compact K\"ahler manifolds
	\cite{GuedjZeriahi}.
	Their work has had profound applications to complex geometry,
	K\"ahler--Einstein metrics, and nonlinear partial differential
	equations.
	
	For complex Hessian equations on compact K\"ahler manifolds,
	fundamental second-order estimates were established by Hou, Ma and Wu
	\cite{HouMaWu}.
	These estimates opened the way to the study of nonlinear Hessian
	equations on compact K\"ahler manifolds.
	The case of prescribed singularity types remained largely
	unexplored until the recent work of Lu and Nguyen
	\cite{LuNguyen22}, who introduced a framework for complex Hessian
	equations with prescribed singularities on compact K\"ahler
	manifolds.
	
	Motivated by their work, we aim to develop a relative finite
	energy theory adapted to a fixed singularity type.
	
	The purpose of this paper is to initiate a relative finite energy
	theory for complex Hessian equations with prescribed singularities.
	
	More precisely, we fix a negative $m$-subharmonic function
	
	\[
	\psi\in SH_m(\Omega),
	\]
	
	which serves as a reference singularity.
	Motivated by the relative energy classes introduced by Guedj and
	Zeriahi in the Monge--Amp\`ere setting, we define a relative Hessian
	energy class
	
	\[
	\mathcal E_{m,\psi}(\Omega),
	\]
	
	consisting of $m$-subharmonic functions whose singularities are
	controlled by $\psi$ and whose relative Hessian energy is finite.
	
	The construction of such a theory is far from straightforward.
	Indeed, compared with the Monge--Amp\`ere case, the Hessian framework
	possesses fewer structural properties, and many classical arguments do
	not directly extend to the relative setting.
	In particular, one must construct a suitable Hessian operator,
	establish continuity properties, and prove comparison principles
	compatible with the prescribed singularity type.
	
	\subsection*{Main results}
	
	Our first result establishes the existence of relative mixed Hessian
	products.
	
	\begin{theorem}
		Let
		
		\[
		u_1,\ldots,u_m
		\in
		\mathcal E_{m,\psi}(\Omega).
		\]
		
		Then the mixed Hessian measure
		
		\[
		dd^c u_1\wedge\cdots\wedge dd^c u_m
		\wedge\beta^{n-m}
		\]
		
		is well defined.
		
		Moreover, it is obtained as the weak limit of the mixed Hessian
		measures associated with the canonical truncations
		
		\[
		u_{j,k}:=\max(u_j,\psi-k).
		\]
	\end{theorem}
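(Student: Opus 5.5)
The plan is to reduce to Cegrell-type convergence results for Hessian measures of monotone sequences, localized on the sets where the truncations stop changing. We use the working definition of $\mathcal E_{m,\psi}(\Omega)$ suggested by the introduction. A function $u\in SH_m(\Omega)$, $u\le 0$, with $u\ge \psi - C$ near the singular locus is not assumed. Instead, we only require that the truncations $u_k:=\max(u,\psi-k)$ have uniformly bounded relative Hessian energy, for instance $\sup_k\int_\Omega H_m(u_k)<\infty$ together with a control of the mass of $H_m(u_k)$ on the sets $\{u\le \psi-k\}$ tending to zero. We also assume that $\psi$ itself belongs to a class, such as Cegrell's $\mathcal E_m(\Omega)$, where the mixed products $dd^c v_1\wedge\cdots\wedge dd^c v_m\wedge\beta^{n-m}$ are already well defined and continuous along decreasing sequences. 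This is the Hessian analogue of Cegrell's theorem, due to Lu and to Dinew--Kołodziej. Each $u_{j,k}$ is then $\ge \psi-k$, lies in that class, and the products $\mu_k:=dd^c u_{1,k}\wedge\cdots\wedge dd^c u_{m,k}\wedge\beta^{n-m}$ are well defined positive measures.

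\textbf{Step 1 (uniform mass bound).} We show $\sup_k \mu_k(\Omega)<\infty$ by a mixed Hessian inequality of Dinew type:
\[
\int_\Omega dd^c v_1\wedge\cdots\wedge dd^c v_m\wedge\beta^{n-m}\le \prod_{j}\Big(\int_\Omega H_m(v_j)\Big)^{1/m}.
\]
This inequality holds in $\mathcal E_m$, and we apply it to $v_j=u_{j,k}$ together with the energy bound. By weak compactness, subsequences of $\mu_k$ converge.

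\textbf{Step 2 (identification of the limit via plurifine locality).} Fix $k<l$. On the $m$-plurifine open set $O_k:=\bigcap_j\{u_j>\psi-k\}$ we have $u_{j,k}=u_{j,l}=u_j$. By the quasi-continuity and plurifine locality of Hessian measures for functions in $\mathcal E_m$, this gives $\mathbf 1_{O_k}\mu_k=\mathbf 1_{O_k}\mu_l$. Hence the measures $\nu_k:=\mathbf 1_{O_k}\mu_k$ increase in $k$. We define
\[
dd^c u_1\wedge\cdots\wedge dd^c u_m\wedge\beta^{n-m}:=\lim_k\nu_k=\sup_k\nu_k,
\]
which is a Radon measure by Step 1. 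This definition is intrinsic: it does not depend on the choice of approximating truncations, and it is symmetric and multilinear on the sets $O_k$.

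\textbf{Step 3 (weak convergence $\mu_k\to\mu$).} We write $\mu_k=\nu_k+\mathbf 1_{\Omega\setminus O_k}\mu_k$. For $\chi\in C_c(\Omega)$, $\int\chi\,d\nu_k\to\int\chi\,d\mu$ by monotone convergence. It therefore remains to show
\[
\mu_k\Big(\bigcup_j\{u_j\le\psi-k\}\Big)\to 0.
\]
This is the main obstacle. We would first use polarization (multilinearity) together with the mixed inequality restricted to Borel sets, of the form $\int_E dd^cv_1\wedge\cdots\le\prod(\int_E H_m(v_j))^{1/m}$, which is valid in $\mathcal E_m$. This reduces the claim to the single-function statement $\int_{\{u_{j}\le\psi-k\}}H_m(u_{j,k})\to 0$. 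That statement is exactly the defining "no mass escaping to the singularity" condition of $\mathcal E_{m,\psi}$. If the paper's definition is instead phrased through relative energy $\int(\psi-u)H_m(u)$, we would derive it from the estimate $k\int_{\{u\le\psi-k\}}H_m(u_k)\le\int(\psi-u_k)H_m(u_k)$, whose right-hand side stays bounded. This uses the comparison principle in the form $\int_{\{u_k<\psi-k+\epsilon\}}$ relative to $\psi-k$. Combining Steps 2 and 3 gives weak convergence of the whole sequence, not just a subsequence, to the measure $\mu$ defined in Step 2, which proves the statement.
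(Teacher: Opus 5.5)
Your architecture matches the paper's: a uniform mass bound, then identification of the limit by locality on the sets where the truncations stabilize. You also correctly isolate what the paper's appeal to ``locality'' leaves implicit, namely that $\mu_k$ must not keep mass on $\Omega\setminus O_k=\bigcup_j\{u_j\le\psi-k\}$. \textbf{However, the inequality you use to close this step is false.} For $m\ge2$ the bound $\int_E dd^cv_1\wedge\cdots\wedge dd^cv_m\wedge\beta^{n-m}\le\prod_j\bigl(\int_E H_m(v_j)\bigr)^{1/m}$ fails for Borel $E$, even with $v_j\in\mathcal E^0_m$. On the unit ball take $v_i=\max\bigl(|z_i|^2-2,\,3(|z|^2-1)\bigr)$, $1\le i\le m$. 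On $E=\{|z|<1/2\}$ one has $v_i=|z_i|^2-2$, so $dd^cv_i$ has rank one and $H_m(v_i)=0$ there. Yet $dd^cv_1\wedge\cdots\wedge dd^cv_m\wedge\beta^{n-m}$ is a positive multiple of Lebesgue measure on $E$. Dinew's mixed inequality is a pointwise \emph{lower} bound. The Cegrell--Lu H\"older inequality is global on $\Omega$, coming from integration by parts against functions with zero boundary values. Neither localizes to $E$. The same functions $|z_i|^2-2$ on a polydisc are bounded, hence in $\mathcal E_m$, and they show that the total-mass inequality of Step~1 also fails in $\mathcal E_m$. It holds in $\mathcal F_m$, and nothing in your setup puts the truncations $u_{j,k}\ge\psi-k$ there.

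What the tail estimate needs is a weighted bound. Since $u_{j,k}=\psi-k$ on $\{u_j\le\psi-k\}\cap\{\psi>-\infty\}$,
\[
k\,\mu_k\bigl(\{u_j\le\psi-k\}\cap\{\psi>-\infty\}\bigr)\le\int_\Omega(\psi-u_{j,k})\,dd^cu_{1,k}\wedge\cdots\wedge dd^cu_{m,k}\wedge\beta^{n-m},
\]
and you must show the right-hand side is bounded in $k$. This requires a H\"older-type inequality for \emph{mixed} relative energies, proved by relative integration by parts; that is where boundary control of $u-\psi$ enters. That lemma is the missing idea, and your single-function estimate for $H_m(u_{j,k})$ does not replace it.

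You must also handle the $m$-polar set $\{\psi=-\infty\}$. There the weight is undefined, and when $u_j\le\psi$ (as in the paper's class) the sets $O_k$ miss it, so $\sup_k\nu_k$ never sees it. If the $\mu_k$ charge this set, your Step~3 claim is false and your Step~2 measure is not $\lim_k\mu_k$. For example, take $m=n$, $\psi=\log|z|$ on the unit ball, and $u_1=\cdots=u_m=\psi$; note $\psi\in\mathcal E^0_{m,\psi}$. Then $u_{j,k}=\psi$, so $\mu_k=\delta_0$ for every $k$, while $\nu_k=0$.

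Incidentally, under your standing assumption that $\psi$ lies in the Cegrell-type class $\mathcal E_m$, the truncations lie in $\mathcal E_m$. Once you show $u_j\in\mathcal E_m$, the statement follows directly from continuity of mixed Hessian measures along $u_{j,k}\downarrow u_j$ in $\mathcal E_m$. That route also assigns polar mass correctly.
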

	
	The second theorem is a relative version of the classical
	Bedford--Taylor continuity theorem.
	
	\begin{theorem}
		Let
		
		\[
		u_j^k\in\mathcal E_{m,\psi}(\Omega),
		\qquad
		1\le k\le m,
		\]
		
		be decreasing sequences satisfying
		
		\[
		u_j^k\downarrow u^k.
		\]
		
		Then
		
		\[
		dd^c u_j^1
		\wedge\cdots\wedge
		dd^c u_j^m
		\wedge
		\beta^{n-m}
		\]
		
		converges weakly to
		
		\[
		dd^c u^1
		\wedge\cdots\wedge
		dd^c u^m
		\wedge
		\beta^{n-m}.
		\]
	\end{theorem}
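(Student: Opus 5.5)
We prove the relative Bedford--Taylor theorem in the standard Cegrell way: reduce to the canonical truncations $\max(\cdot,\psi-k)$, for which continuity is known, and control the error with a relative energy estimate. Throughout we use the previous theorem, which defines each mixed product as the weak limit over the truncations $u_{j,k}=\max(u_j,\psi-k)$. We also use that $\mathcal E_{m,\psi}(\Omega)$ is stable under $\max$ and under taking the smaller function.

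Fix a test form $\chi\in C_c^\infty(\Omega)$; by linearity and positivity we may assume $\chi\ge 0$. Write
\[
\mu_j=dd^c u_j^1\wedge\cdots\wedge dd^c u_j^m\wedge\beta^{n-m},\qquad \mu=dd^c u^1\wedge\cdots\wedge dd^c u^m\wedge\beta^{n-m},
\]
and let $\mu_j^{(k)}$, $\mu^{(k)}$ be the same products built from $\max(u_j^s,\psi-k)$ and $\max(u^s,\psi-k)$. We split
\[
\Big|\int\chi\,d\mu_j-\int\chi\,d\mu\Big|\le \Big|\int\chi\,d(\mu_j-\mu_j^{(k)})\Big|+\Big|\int\chi\,d(\mu_j^{(k)}-\mu^{(k)})\Big|+\Big|\int\chi\,d(\mu^{(k)}-\mu)\Big|
\]
and handle the three terms separately.

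\textbf{Middle term (fixed $k$).} The functions $\max(u_j^s,\psi-k)$ decrease to $\max(u^s,\psi-k)$ as $j\to\infty$. All of them lie between $\psi-k$ and $0$, so they have the same singularity type as $\psi$. For $\psi$ bounded this is the classical Bedford--Taylor convergence for $m$-sh functions (Blocki, Dinew). In general we plan to invoke the relative monotone convergence theorem announced in the abstract, applied to functions squeezed between $\psi-k$ and $0$. For this class it can be proved by the usual quasicontinuity argument with respect to the relative Hessian capacity. The key point is that on the open set $\{\psi>-k+\varepsilon\}$ the truncations coincide with the bounded functions $\max(u_j^s,-k)$, and by the locality of the Hessian operator on plurifine open sets the products agree there. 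So the middle term tends to $0$ as $j\to\infty$ for each fixed $k$.

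\textbf{Outer terms (uniform in $j$).} The first and third terms are the main obstacle, because the tail estimate must be uniform in $j$. We use locality again. On $\bigcap_s\{u_j^s>\psi-k\}$, an open set in the plurifine topology, we have $\mu_j=\mu_j^{(k)}$. Hence
\[
\Big|\int\chi\,d(\mu_j-\mu_j^{(k)})\Big|\le \|\chi\|_\infty\sum_s\big(\mu_j+\mu_j^{(k)}\big)\big(\{u_j^s\le\psi-k\}\cap K\big),\qquad K=\operatorname{supp}\chi.
\]
Since $u_j^s\ge u^s$, we have $\{u_j^s\le\psi-k\}\subset\{u^s\le\psi-k\}$. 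It therefore suffices to prove
\[
\sup_j\,\mu_j^{(k)}\big(\{u^s\le\psi-k\}\big)\to0\quad\text{as }k\to\infty,
\]
and similarly for $\mu_j$. We plan to get this by combining a mixed Hessian inequality (Dinew's inequality in integrated form) with the relative energy bound defining $\mathcal E_{m,\psi}$. The mass of the sublevel set is dominated by $\frac1k\int(\psi-u^s)\,\mu_j^{(k)}$. By an integration-by-parts, or relative energy, comparison, this integral is bounded by a fixed combination of the relative energies of $u^1,\dots,u^m$, and those energies do not depend on $j$ because $u_j^s\ge u^s$ and energy is monotone. This gives the bound $C/k$ uniformly in $j$.

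The third term is the same estimate with $j$ removed; it also follows directly from the previous theorem. Finally we choose $k$ large so that the outer terms are below $\varepsilon$, and then $j$ large for the middle term. The delicate point is justifying the integration by parts relative to $\psi$ when $\psi$ has no boundary values. We intend to handle it by first proving the estimate for functions equal to $\psi$ near $\partial\Omega$ and then passing to the limit.
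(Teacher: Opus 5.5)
Your route is not the paper's. The paper inducts on the number of factors, writes $\int_\Omega\chi\,dd^cu_j^1\wedge T_j=\int_\Omega u_j^1\,dd^c\chi\wedge T_j$ by its relative integration-by-parts formula, and splits the error into $A_j=\int_\Omega(u_j^1-u^1)\,dd^c\chi\wedge T_j$ and $B_j=\int_\Omega u^1\,dd^c\chi\wedge(T_j-T)$. So it never estimates truncation errors, although neither $L^1_{\mathrm{loc}}$ convergence with a mass bound nor weak convergence of $T_j$ actually controls the unbounded, merely upper semicontinuous $u^1$ against the singular measures $T_j$. Your truncate-and-control-the-tails scheme is the standard finite-energy strategy, but its load-bearing step fails in this class.

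Everything rests on $\sup_j\int_\Omega(\psi-u^s)\,d\mu_j^{(k)}\le C$. You get this from a relative version of Cegrell's H\"older-type energy inequality together with monotonicity of $E_{m,\psi}$. Both come from integration by parts with vanishing boundary terms, and $\mathcal E^0_{m,\psi}$ imposes no boundary behaviour on $u-\psi$ (only $\psi-C\le u\le\psi$ and finite mass). For such functions both statements are false. On the unit ball take $\psi=|z|^2-1$, $M=\int_\Omega H_m(\psi)$ and $v_A=\max(\psi-A^{-m},A\psi)$ with $A>1$. Near $\partial\Omega$ one has $v_A=A\psi$, so by Stokes $(dd^cv_A)^k\wedge(dd^c\psi)^{m-k}\wedge\beta^{n-m}$ has total mass $A^kM$. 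Since $|v_A-\psi|\le A^{-m}$, this gives $|E_{m,\psi}(v_A)|\le M$. On the other hand, $\int_\Omega(\psi-u_0)\,H_m(v_A)=A^mM$ for $u_0=\psi-1$, whose energy is $-M$. Hence no bound of $\int_\Omega(\psi-u_0)\,dd^cw_1\wedge\cdots\wedge dd^cw_m\wedge\beta^{n-m}$ in terms of relative energies can hold. Monotonicity fails as well: $v_A\ge\psi-A^{-m}$, yet $E_{m,\psi}(v_A)\to-M/(m+1)$ while $E_{m,\psi}(\psi-A^{-m})=-A^{-m}M\to0$. Your repair (prove the estimate for functions equal to $\psi$ near $\partial\Omega$, then pass to the limit) therefore cannot work. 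What is missing is a genuinely local tail estimate on $K=\operatorname{supp}\chi$, or a boundary normalization of $u-\psi$ built into the class.

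Two further steps fail as written. First, $\{u^s\le\psi-k\}$ contains the pole set $\{\psi=-\infty\}$ for every $k$, and the relative products can charge it. For $m=n$ and $\psi=\log|z|$ on the unit ball, each $\mu_j^{(k)}$ has an atom of mass $1$ at the origin, because all its factors have the singularity of $\log|z|$. So your sufficient condition $\sup_j\mu_j^{(k)}(\{u^s\le\psi-k\})\to0$ is false, and $\frac1k\int(\psi-u^s)\,d\mu_j^{(k)}$ is undefined there. You would have to discard the pole set and prove separately that $\mu_j$ and $\mu_j^{(k)}$ give it the same mass; the weak-limit definition only yields an inequality.

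Second, the coincidence you claim in the middle term is wrong. Since $\psi<0$, the functions $\max(u_j^s,\psi-k)$ and $\max(u_j^s,-k)$ differ at every point where $u_j^s<-k$, whatever $\varepsilon$ is. For unbounded $\psi$, fixed-$k$ convergence is itself a continuity statement for unbounded functions and needs separate control near $\{\psi=-\infty\}$; it does not follow from Bedford--Taylor plus locality. The paper simply assumes this case, so you are no worse off there, but your justification does not stand.
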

	
	The next result is the key tool of the paper.
	
	\begin{theorem}[Relative comparison principle]
		Let
		
		\[
		u,v\in\mathcal E_{m,\psi}(\Omega).
		\]
		
		Then
		
		\[
		\int_{\{u<v\}}
		H_m(v)
		\le
		\int_{\{u<v\}}
		H_m(u).
		\]
		
		In particular, the Hessian operator is injective on
		\(\mathcal E_{m,\psi}(\Omega)\).
	\end{theorem}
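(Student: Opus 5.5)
The plan is to reduce to the bounded-relative case via the canonical truncations $u_k:=\max(u,\psi-k)$ and $v_k:=\max(v,\psi-k)$, and then pass to the limit using the convergence results above. For fixed $k$, both $u_k$ and $v_k$ lie in $\mathcal E_{m,\psi}(\Omega)$ and satisfy $\psi-k\le u_k,v_k$. I will also use that functions in $\mathcal E_{m,\psi}$ are bounded above by $0$ and approach the boundary like $\psi$ in the relative sense built into the definition. For such relatively bounded functions I would first prove the inequality by the classical argument of Bedford--Taylor and Dinew--Ko{\l}odziej.

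\emph{Relatively bounded case.} Replace $v$ by $v_\varepsilon:=v-\varepsilon$, so that $\{u<v_\varepsilon\}\uparrow\{u<v\}$ as $\varepsilon\downarrow0$. Set $w:=\max(u,v_\varepsilon)$. Then $w=u$ outside $\{u<v_\varepsilon\}$, and near $\partial\Omega$ we have $w=u$ in the relative sense, since $u-\psi$ and $v-\psi$ have the same boundary behaviour. By the maximum principle for Hessian measures (locality on the plurifine open set $\{u<v_\varepsilon\}$, valid in $\mathcal E_{m,\psi}$ by the monotone convergence theorem applied to the truncations),
\[
\mathbf 1_{\{u<v_\varepsilon\}}H_m(w)=\mathbf 1_{\{u<v_\varepsilon\}}H_m(v).
\]
The key identity is the equality of total masses $\int_\Omega H_m(w)=\int_\Omega H_m(u)$ for two functions in $\mathcal E_{m,\psi}$ with $w\ge u$ and $w=u$ near the boundary (relatively). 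I would obtain it from integration by parts against a cutoff, using the relative mixed Hessian products constructed in the first main theorem. Combining the two facts gives
\[
\int_{\{u<v_\varepsilon\}}H_m(v)=\int_{\{u<v_\varepsilon\}}H_m(w)= \int_\Omega H_m(u)-\int_{\{u\ge v_\varepsilon\}}H_m(w)\le\int_{\{u<v_\varepsilon\}}H_m(u),
\]
where the last inequality uses $\mathbf 1_{\{u>v_\varepsilon\}}H_m(w)=\mathbf 1_{\{u>v_\varepsilon\}}H_m(u)$ together with the standard inequality $H_m(\max(u,v_\varepsilon))\ge \mathbf 1_{\{u\ge v_\varepsilon\}}H_m(u)$. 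Letting $\varepsilon\to0$ gives the bounded-relative statement.

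\emph{Passage to general $u,v$.} Apply the previous step to $u_k,v_k$. On $\{u<v\}$ we have $u_k<v_k$ wherever $u>\psi-k$, and this set exhausts $\{u<v\}$ up to $\{u=-\infty\}$ relative to $\psi$. The Hessian measures of $u,v$ do not charge sets of the form $\{u\le\psi-k\}$ asymptotically; this should follow from finite relative energy, i.e.\ $\int_{\{u\le\psi-k\}}H_m(u_k)\to0$. Together with $\mathbf 1_{\{u>\psi-k\}}H_m(u_k)=\mathbf 1_{\{u>\psi-k\}}H_m(u)$ (locality), this lets me let $k\to\infty$ and obtain the inequality.

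\emph{Injectivity.} If $H_m(u)=H_m(v)=\mu$, I would run the standard argument. Apply the comparison principle to $u$ and $v+\varepsilon\rho$, where $\rho$ is an exhaustion with $H_m(\rho)\ge\beta^n$; the mixed-term inequality $H_m(v+\varepsilon\rho)\ge H_m(v)+\varepsilon^mH_m(\rho)$ then shows that $\{u<v+\varepsilon\rho\}$ has Lebesgue measure zero, hence $u\ge v$, and by symmetry $u=v$.

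The main obstacle will be the total-mass identity $\int H_m(w)=\int H_m(u)$ in the relative setting. Its proof requires an integration by parts for relative mixed products with boundary terms controlled by $\psi$, together with the uniform mass control of the truncations near $\partial\Omega$.
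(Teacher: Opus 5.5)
There is a genuine gap: the boundary behaviour you take for granted. You assume that functions in $\mathcal E_{m,\psi}(\Omega)$ ``approach the boundary like $\psi$ in the relative sense built into the definition''. This is what gives $w=\max(u,v-\varepsilon)=u$ near $\partial\Omega$, and hence the mass identity $\int_\Omega H_m(w)=\int_\Omega H_m(u)$.

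No such condition is part of the definition. $\mathcal E^0_{m,\psi}(\Omega)$ only requires $\psi-C\le u\le\psi$ and $\int_\Omega H_m(u)<\infty$, so $u-\psi$ may stay at any level in $[-C,0]$ all the way to $\partial\Omega$. Without $\liminf_{z\to\partial\Omega}(u-v)\ge0$ the mass identity fails, and so does the statement itself.

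Here is a counterexample. On the unit ball $\mathbb B$ take $\psi\equiv-1$ or $\psi=\log|z|$, and set $u=\psi-1$ and $v=\psi+|z|^2-1$.
\begin{itemize}
\item Both satisfy $\psi-1\le u,v\le\psi$, and both have finite Hessian mass and finite relative energy, so they lie in the class.
\item $\{u<v\}=\mathbb B\setminus\{0\}$, and everything is smooth there.
\item Expanding $H_m(v)$ and dropping the nonnegative mixed terms gives
\[
\int_{\{u<v\}}H_m(v)\;\ge\;\int_{\{u<v\}}H_m(\psi)+\int_{\mathbb B}\beta^n\;>\;\int_{\{u<v\}}H_m(u).
\]
\end{itemize}
Injectivity fails even more directly, since $H_m(\psi-1)=H_m(\psi)$. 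So your relatively bounded case cannot be proved for the class as defined. Everything downstream of it, including your $v+\varepsilon\rho$ argument, inherits the problem.

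The paper's proof does not supply the missing ingredient either, so this is not an idea you overlooked. Its Step~2 applies the classical comparison principle to $u_k,v_k$. That principle carries the same hidden hypothesis $\liminf_{z\to\partial\Omega}(u_k-v_k)\ge0$. It also needs local boundedness, which fails when $\psi$ is singular.

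A correct statement requires a boundary normalization. One option is the extra assumption $\liminf_{z\to\partial\Omega}(u-v)\ge0$. Another is to build $\lim_{z\to\partial\Omega}(u-\psi)=0$ into the approximants of $\mathcal E^0_{m,\psi}$, as in Cegrell's classes $\mathcal E_0$ and $\mathcal F$.

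Under such a hypothesis your route is the standard one:
\begin{itemize}
\item $\{u<v-\varepsilon\}\Subset\Omega$.
\item The mass identity then follows by integrating by parts against a cutoff equal to $1$ near that set.
\item The $v+\varepsilon\rho$ trick (e.g.\ $\rho=|z|^2-R^2$ with $\Omega\subset B(0,R)$) yields injectivity.
\end{itemize}

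One smaller point would remain in your $k\to\infty$ step. Recovering $\int_{\{u<v\}}H_m(v)$ requires $H_m(v)$ to put no mass on $\{u<v\}\cap\{u=-\infty\}$. The estimate $\int_{\{u\le\psi-k\}}H_m(u_k)\to0$ that you cite only controls $H_m(u_k)$, not $H_m(v)$. Measures in this class can charge $m$-polar sets (for example the pole of $\log|z|$ when $m=n$), so this needs a separate argument.
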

	
	As an application, we obtain the solvability of Hessian equations with
	prescribed singularities.
	
	\begin{theorem}
		Let $\mu$ be a finite positive Radon measure on $\Omega$ that does not
		charge $m$-polar sets and satisfies a suitable relative energy
		condition.
		
		Then there exists a unique function
		
		\[
		u\in\mathcal E_{m,\psi}(\Omega)
		\]
		
		such that
		
		\[
		(dd^c u)^m\wedge\beta^{n-m}
		=
		\mu.
		\]
	\end{theorem}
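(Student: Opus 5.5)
We prove existence by approximating $\mu$ with measures for which the Hessian equation is solvable, and uniqueness by the relative comparison principle (Theorem 1.3).

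\textbf{Uniqueness.} Suppose $u,v\in\mathcal E_{m,\psi}(\Omega)$ both satisfy $H_m(u)=H_m(v)=\mu$. The injectivity part of Theorem 1.3 gives $u=v$ directly. If one wants to see the mechanism, apply the comparison inequality to $v$ and $u+\varepsilon\rho$, where $\rho$ is a negative bounded $m$-subharmonic exhaustion of $\Omega$ with $H_m(\rho)\ge\beta^n$. Since $H_m(u+\varepsilon\rho)\ge\mu+\varepsilon^m\beta^n$, this forces the Lebesgue measure of $\{u+\varepsilon\rho<v\}$ to vanish. Hence $u+\varepsilon\rho\ge v$ almost everywhere, and therefore everywhere by $m$-subharmonicity. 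Letting $\varepsilon\to0$ gives $u\ge v$, and the symmetric argument gives $u\le v$.

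\textbf{Existence.} We proceed by Cegrell-type approximation adapted to the reference singularity $\psi$.

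\emph{Step 1: decompose $\mu$.} Since $\mu$ does not charge $m$-polar sets, Cegrell's decomposition lemma for Hessian measures gives $\mu=f\,H_m(\phi)$ with $\phi$ bounded in $\mathcal E^0_m$ and $f\in L^1(H_m(\phi))$. Set $\mu_j:=\min(f,j)\,H_m(\phi)$, so that $\mu_j\uparrow\mu$ and each $\mu_j$ is dominated by the Hessian measure of a bounded function.

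\emph{Step 2: solve the approximate problems.} For $\mu_j$ the classical Dinew--Ko{\l}odziej/Lu theory yields bounded solutions $w_j$. To impose the singularity type, we instead solve on the relative envelope: take $u_j$ to be the supremum of all $w\in SH_m^-(\Omega)$ with $w\le\psi$ and $H_m(w)\ge\mu_j$. The comparison principle for bounded perturbations of $\psi$, namely the truncations $\max(\cdot,\psi-k)$ together with Theorem 1.1, shows that $u_j$ is relatively maximal off $\mathrm{supp}\,\mu_j$ and that $H_m(u_j)=\mu_j$. Since $\mu_j$ increases, Theorem 1.3 shows that $u_j$ decreases.

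\emph{Step 3: uniform energy bound.} We need $\int(\psi-u_j)\,H_m(u_j)\le C$ uniformly in $j$, and this is where the ``suitable relative energy condition'' enters. We take it to mean $\int(\psi-w)\,d\mu\le C\,E_\psi(w)^{1/(m+1)}$ for all $w\in\mathcal E_{m,\psi}$. Integration by parts in the relative class, established via the truncations of Theorem 1.1, gives
\[
E_\psi(u_j)=\int(\psi-u_j)\,d\mu_j\le C\,E_\psi(u_j)^{1/(m+1)},
\]
so $E_\psi(u_j)$ is bounded. Hence $u:=\lim u_j\not\equiv-\infty$ and $u\in\mathcal E_{m,\psi}(\Omega)$ by lower semicontinuity of the energy along decreasing sequences.

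\emph{Step 4: pass to the limit.} Theorem 1.2 gives $H_m(u_j)\to H_m(u)$ weakly, while $H_m(u_j)=\mu_j\to\mu$. Therefore $H_m(u)=\mu$.

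\textbf{Main obstacle.} The hardest part is Step 2: showing that the relative envelope actually attains $H_m(u_j)=\mu_j$ and keeps the singularity type $\psi$, rather than merely satisfying $\ge$. We plan a balayage argument on small balls, replacing $u_j$ by the solution of the Dirichlet problem there, combined with Theorem 1.3 applied to $u_j$ and its modification. The justification of relative integration by parts in Step 3 is the second delicate point. For it we expect to use the monotone convergence theorem for the truncations $\max(u,\psi-k)$.
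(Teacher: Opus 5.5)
Your existence scheme differs from the paper's in a way that helps. The paper approximates $\mu$ weakly by smooth densities $f_j\,dV$. It then applies its energy hypothesis to $\mu_j$, although the hypothesis is only assumed for $\mu$. Finally it passes to the limit along a subsequence that converges only in $L^1_{\rm loc}$, where the continuity theorem (stated for decreasing sequences) does not apply. Your monotone approximation $\mu_j=\min(f,j)\,H_m(\phi)\uparrow\mu$ fixes both problems. Since $\mu_j\le\mu$, the hypothesis transfers to $\mu_j$. Also $u_{j+1}\le u_j$ follows directly from your envelope definition, because the admissible family shrinks as $j$ grows, so you do not even need Theorem~1.3 here. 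Step~4 can therefore legitimately use Theorem~1.2.

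\textbf{Step 2 is a genuine gap.} This is the step you flag yourself, and it is exactly the step the paper also asserts without proof (``one obtains a solution $u_j$ having the prescribed singularity type''). It fails in general. Take $m=n$, $\Omega$ the unit ball, and $\psi=\log|z|$, which is negative and has analytic singularities. Any $w$ with $\psi-C\le w\le\psi$ has Lelong number $1$ at the origin; this includes your envelope, since $\psi+j^{1/m}\phi\le u_j\le\psi$. By Demailly's inequality, $(dd^cw)^n$ then has an atom at $0$ of mass at least that of $(dd^c\log|z|)^n$. But $\mu_j\le j\,H_m(\phi)$ puts no mass on the polar set $\{0\}$. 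So $H_m(u_j)=\mu_j$ is impossible, and no balayage on balls meeting $\{\psi=-\infty\}$ can remove that atom without violating $w\le\psi$. The atom also survives weak limits of truncations. Hence, for this $\psi$, no element of $\mathcal E_{m,\psi}(\Omega)$ has a Hessian measure vanishing on $m$-polar sets. The statement therefore needs either an extra hypothesis on $\psi$, ensuring its singularity type forces no Hessian mass on $m$-polar sets, or a modified equation such as $H_m(u)=\mu+\mathbf 1_{\{\psi=-\infty\}}H_m(\psi)$.

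\textbf{The class has no boundary normalization.} This undermines both your uniqueness argument and Step~3. If $u\in\mathcal E_{m,\psi}(\Omega)$ solves the equation, so does $u-1\in\mathcal E_{m,\psi}(\Omega)$. So the injectivity you quote from Theorem~1.3 is false as stated. Any comparison principle needs a boundary condition such as $\liminf_{z\to\partial\Omega}(u-v)\ge0$, for instance a class with $u-\psi\to0$ at $\partial\Omega$. The paper tacitly uses such a condition in its integrations by parts but never puts it into the definition.

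The same defect makes your energy condition vacuous. Testing it on $w=\psi-c$ gives $c\,\mu(\Omega)\le C\bigl(c\int_\Omega H_m(\psi)\bigr)^{1/(m+1)}$ for every $c>0$, which forces $\mu=0$ whenever $\int_\Omega H_m(\psi)<\infty$.

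Once a normalization is built in, your $\varepsilon\rho$ argument is the right mechanism, but as written the roles are reversed. Comparing $u+\varepsilon\rho$ with $v$ only yields $\mu(\{u+\varepsilon\rho<v\})\le\int_{\{u+\varepsilon\rho<v\}}H_m(u+\varepsilon\rho)$, which is no contradiction. Compare $u$ with $v+\varepsilon\rho$ instead; this gives $\varepsilon^m\int_{\{u<v+\varepsilon\rho\}}\beta^n\le0$.

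Finally, with the paper's definition $E_\psi(u_j)$ is not equal to $\int(\psi-u_j)\,d\mu_j$. The mixed terms increase in the power of $dd^cu_j$, by an integration by parts whose boundary terms must vanish, which again requires the normalization. This gives $|E_\psi(u_j)|\le\int(\psi-u_j)\,d\mu_j\le\int(\psi-u_j)\,d\mu$, which is what Step~3 actually needs.
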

	
	\subsection*{Method of proof}
	
	The proofs combine techniques from pluripotential theory, Hessian
	capacity theory, and finite energy methods.
	
	We first introduce a relative Hessian capacity
	
	\[
	Cap_{m,\psi},
	\]
	
	which provides quantitative control of sublevel sets.
	Using this capacity, we establish compactness properties for bounded
	energy families and develop a truncation procedure adapted to the
	prescribed singularity type. The relative capacity introduced in this paper is inspired by the
	classical Bedford--Taylor capacity and the capacity estimates for
	plurisubharmonic functions developed by Zeriahi
	\cite{Zeriahi13}.
	
	The relative Hessian operator is then constructed through canonical
	truncations
	
	\[
	u_k=\max(u,\psi-k).
	\]
	
	Uniform energy estimates allow us to pass to the limit and define
	mixed Hessian products on the entire class
	\(\mathcal E_{m,\psi}(\Omega)\). The construction of mixed Hessian products follows the philosophy of
	Bedford--Taylor \cite{BedfordTaylor} and the continuity theory of
	Xing \cite{Xing96}.
	
	A crucial ingredient is a relative integration-by-parts formula,
	which leads to a continuity theorem and a comparison principle.
	These results imply uniqueness, while existence follows from an
	approximation procedure together with compactness and stability
	arguments.
	
	\subsection*{Organization of the paper}
	
	In Section~2 we recall basic facts on $m$-subharmonic functions,
	Hessian measures, and Hessian capacities.
	
	Section~3 introduces relative capacities, relative mixed Hessian products and proves monotone convergence, a relative integration-by-parts formula.
	
	In Section~4 we construct the continuity of mixed Hessian
	operators along decreasing sequences in the relative finite
	energy class $\mathcal E_{m,\psi}$.
	
	Section~5 is devoted to the
	relative comparison principle, domination principle and uniqueness in the class
	\(\mathcal E_{m,\psi}(\Omega)\)
	
	In Section~6 contains the proof of the existence and uniqueness
	theorem for Hessian equations in the class
	\(\mathcal E_{m,\psi}(\Omega)\). 
	
	Finally, Section~7 presents several applications of the relative
	finite energy theory.

	\section{Preliminaries}
	
	In this section we recall several notions from the theory of
	complex Hessian equations and fix the notation used throughout the paper.
	
	%%%%%%%%%%%%%%%%%%%%%%%%%%%%%%%%%%%%%%%%%%%%%%%%%%%%%%%%%%%%
	
	\subsection{m-subharmonic functions}
	
	Let
	
	\[
	\Omega \subset \mathbb C^n
	\]
	
	be a bounded domain.
	
	We denote by
	
	\[
	\beta:=dd^c|z|^2
	\]
	
	the standard Kähler form on $\mathbb C^n$, where
	
	\[
	d=\partial+\bar\partial,
	\qquad
	d^c=\frac{i}{2\pi}(\bar\partial-\partial).
	\]
	
	For a real $(1,1)$-form
	
	\[
	\alpha
	=
	\frac{i}{\pi}
	\sum_{j,k}
	\alpha_{j\bar k}
	\,dz_j\wedge d\bar z_k,
	\]
	
	let
	
	\[
	\lambda(\alpha)
	=
	(\lambda_1,\ldots,\lambda_n)
	\]
	
	be the eigenvalues of $\alpha$ with respect to $\beta$.
	
	For $1\le m\le n$, denote by
	
	\[
	\Gamma_m
	=
	\Big\{
	\lambda\in\mathbb R^n:
	S_k(\lambda)\ge0,
	\;
	k=1,\ldots,m
	\Big\},
	\]
	
	where
	
	\[
	S_k(\lambda)
	=
	\sum_{1\le j_1<\cdots<j_k\le n}
	\lambda_{j_1}\cdots\lambda_{j_k}
	\]
	
	is the $k$-th elementary symmetric function.
	
	\begin{definition}
		A function
		
		\[
		u:\Omega\to[-\infty,+\infty)
		\]
		
		is called \emph{$m$-subharmonic}
		(\emph{$m$-sh}) if
		
		\begin{enumerate}
			\item $u$ is upper semicontinuous;
			
			\item $u$ is locally integrable;
			
			\item for every collection of smooth
			$m$-positive $(1,1)$-forms
			
			\[
			\alpha_1,\ldots,\alpha_{m-1},
			\]
			
			one has
			
			\[
			dd^c u
			\wedge
			\alpha_1
			\wedge\cdots\wedge
			\alpha_{m-1}
			\wedge
			\beta^{n-m}
			\ge0
			\]
			
			in the sense of currents.
		\end{enumerate}
		
		The set of all $m$-subharmonic functions on $\Omega$
		will be denoted by
		
		\[
		SH_m(\Omega).
		\]
		
	\end{definition}
	
	For
	
	\[
	m=1,
	\]
	
	one recovers the class of subharmonic functions, while
	
	\[
	m=n
	\]
	
	corresponds to plurisubharmonic functions.
	
	%%%%%%%%%%%%%%%%%%%%%%%%%%%%%%%%%%%%%%%%%%%%%%%%%%%%%%%%%%%%
	
	\subsection{Complex Hessian measures}
	
	Let
	
	\[
	u_1,\ldots,u_m
	\in SH_m(\Omega)\cap L^\infty_{\mathrm{loc}}(\Omega).
	\]
	
	The Bedford--Taylor--Błocki--Dinew theory allows one to define
	the mixed Hessian current
	
	\[
	dd^c u_1
	\wedge\cdots\wedge
	dd^c u_m
	\wedge
	\beta^{n-m}.
	\]
	
	In particular,
	
	\[
	H_m(u)
	:=
	(dd^c u)^m
	\wedge
	\beta^{n-m}
	\]
	
	is a positive Radon measure called the
	\emph{complex Hessian measure} of $u$.
	
	%%%%%%%%%%%%%%%%%%%%%%%%%%%%%%%%%%%%%%%%%%%%%%%%%%%%%%%%%%%%
	
	\subsection{$m$-hyperconvex domains}
	
	\begin{definition}
		
		A bounded domain
		
		\[
		\Omega\subset\mathbb C^n
		\]
		
		is called \emph{$m$-hyperconvex} if there exists
		a negative exhaustion function
		
		\[
		\rho\in SH_m(\Omega)
		\]
		
		such that
		
		\[
		\{z\in\Omega:\rho(z)<-c\}
		\Subset \Omega
		\]
		
		for every
		
		\[
		c>0.
		\]
		
	\end{definition}
	
	Throughout the paper we assume that
	
	\[
	\Omega
	\]
	
	is bounded and $m$-hyperconvex.
	
	%%%%%%%%%%%%%%%%%%%%%%%%%%%%%%%%%%%%%%%%%%%%%%%%%%%%%%%%%%%%
	
	\subsection{Relative singularity type}
	
	Fix once and for all a negative function
	
	\[
	\psi\in SH_m(\Omega)
	\]
	
	with analytic singularities.
	
	We say that two functions
	
	\[
	u,v\in SH_m(\Omega)
	\]
	
	have the same singularity type if
	
	\[
	u-v
	\in L^\infty(\Omega).
	\]
	
	The function $\psi$ will serve as a reference singularity throughout
	the paper.
	
	%%%%%%%%%%%%%%%%%%%%%%%%%%%%%%%%%%%%%%%%%%%%%%%%%%%%%%%%%%%%
	
	\subsection{Relative Hessian capacity}
	
	\begin{definition}
		
		For a Borel set
		
		\[
		E\subset\Omega,
		\]
		
		the relative Hessian capacity associated with $\psi$ is defined by
		
		\[
		Cap_{m,\psi}(E)
		=
		\sup
		\left\{
		\int_E
		(dd^c u)^m
		\wedge
		\beta^{n-m}
		:
		u\in SH_m(\Omega),
		\;
		\psi-1\le u\le\psi
		\right\}.
		\]
		
	\end{definition}
	
	The capacity
	
	\[
	Cap_{m,\psi}
	\]
	
	will play a fundamental role in the compactness and convergence
	arguments below.
	
	%%%%%%%%%%%%%%%%%%%%%%%%%%%%%%%%%%%%%%%%%%%%%%%%%%%%%%%%%%%%
	
	\subsection{Relative finite energy classes}
	
	We first introduce the basic class.
	
	\begin{definition}
		
		Let
		
		\[
		\mathcal E_{m,\psi}^{0}(\Omega)
		\]
		
		denote the collection of functions
		
		\[
		u\in SH_m(\Omega)
		\]
		
		satisfying
		
		\[
		\psi-C\le u\le\psi
		\]
		
		for some constant
		
		\[
		C>0,
		\]
		
		and
		
		\[
		\int_\Omega
		H_m(u)
		<
		+\infty.
		\]
		
	\end{definition}
	
	For
	
	\[
	u\in\mathcal E_{m,\psi}^{0},
	\]
	
	define the relative energy
	
	\[
	E_{m,\psi}(u)
	=
	\frac1{m+1}
	\sum_{k=0}^{m}
	\int_\Omega
	(u-\psi)
	(dd^c u)^k
	\wedge
	(dd^c\psi)^{m-k}
	\wedge
	\beta^{n-m}.
	\]
	
	\begin{definition}
		
		The relative finite energy class
		
		\[
		\mathcal E_{m,\psi}(\Omega)
		\]
		
		consists of all functions
		
		\[
		u\in SH_m(\Omega)
		\]
		
		for which there exists a sequence
		
		\[
		u_j\in \mathcal E_{m,\psi}^{0}(\Omega)
		\]
		
		such that
		
		\[
		u_j\downarrow u
		\]
		
		and
		
		\[
		\sup_j
		|E_{m,\psi}(u_j)|
		<
		+\infty.
		\]
		
	\end{definition}
	
	%%%%%%%%%%%%%%%%%%%%%%%%%%%%%%%%%%%%%%%%%%%%%%%%%%%%%%%%%%%%
	
	\subsection{Canonical truncations}
	
	For
	
	\[
	u\in\mathcal E_{m,\psi},
	\]
	
	we define the canonical truncations
	
	\[
	u_k
	=
	\max(u,\psi-k).
	\]
	
	Then
	
	\[
	\psi-k
	\le
	u_k
	\le
	\psi,
	\]
	
	and
	
	\[
	u_k\downarrow u.
	\]
	
	Canonical truncations will be used repeatedly in the construction
	of relative mixed Hessian products and in the proof of the continuity
	theorem.
	
	%%%%%%%%%%%%%%%%%%%%%%%%%%%%%%%%%%%%%%%%%%%%%%%%%%%%%%%%%%%%
	
	Throughout the paper, the notation
	
	\[
	H_m(u)
	=
	(dd^c u)^m\wedge\beta^{n-m}
	\]
	
	will be used for the Hessian measure of a function
	\(u\in\mathcal E_{m,\psi}(\Omega)\).

	%------------------------------------------------

	\section{Relative Hessian Products}
	
	Throughout this section, $\Omega\subset\mathbb C^n$ is a bounded
	$m$-hyperconvex domain and
	
	\[
	\psi\in SH_m(\Omega)
	\]
	
	is a fixed negative $m$-subharmonic function with analytic
	singularities.
	
	%%%%%%%%%%%%%%%%%%%%%%%%%%%%%%%%%%%%%%%%%%%%%%%%%%%%%%%%%%%%
	
	\subsection{Relative capacities}
	
	\begin{definition}
		Let $E\subset \Omega$ be a Borel subset.
		
		The relative $m$-Hessian capacity associated with $\psi$ is defined by
		
		\[
		Cap_{m,\psi}(E)
		:=
		\sup
		\left\{
		\int_E
		(dd^c u)^m\wedge\beta^{n-m}
		:
		u\in SH_m(\Omega),
		\;
		\psi-1\le u\le \psi
		\right\}.
		\]
		
	\end{definition}
	
	\begin{proposition}
		The set function
		
		\[
		E\longmapsto Cap_{m,\psi}(E)
		\]
		
		is monotone.
		
		Moreover,
		
		\[
		E_1\subset E_2
		\quad\Longrightarrow\quad
		Cap_{m,\psi}(E_1)
		\le
		Cap_{m,\psi}(E_2).
		\]
		
	\end{proposition}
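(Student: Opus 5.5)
The plan is to compare the two suprema in the definition of $Cap_{m,\psi}$ one admissible function at a time. The admissible class
\[
\mathcal U_\psi:=\{u\in SH_m(\Omega):\ \psi-1\le u\le\psi\}
\]
does not depend on $E$. So monotonicity should reduce to monotonicity of integration against a fixed positive measure.

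\emph{Step 1.} For each $u\in\mathcal U_\psi$, I would first check that $H_m(u)=(dd^c u)^m\wedge\beta^{n-m}$ is a well-defined positive Borel measure. Then $\int_E H_m(u)$ makes sense for every Borel set $E$.

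\emph{Step 2.} Fix such a $u$ and Borel sets $E_1\subset E_2$. By positivity of $H_m(u)$ and monotonicity of measures,
\[
\int_{E_1}H_m(u)\le\int_{E_2}H_m(u)\le Cap_{m,\psi}(E_2).
\]

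\emph{Step 3.} Take the supremum over $u\in\mathcal U_\psi$ on the left. This gives $Cap_{m,\psi}(E_1)\le Cap_{m,\psi}(E_2)$, with values in $[0,+\infty]$. No finiteness is needed: if $Cap_{m,\psi}(E_2)=+\infty$ the inequality is trivial. The same computation also gives $Cap_{m,\psi}(\emptyset)=0$.

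The only substantive point, and the expected obstacle, is Step 1. A function $u\in\mathcal U_\psi$ need not be locally bounded, since $\psi$ may be $-\infty$ on its singular set. So the Bedford--Taylor--B{\l}ocki--Dinew definition recalled in Section~2 does not directly apply.

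\emph{First remedy.} Use that $\psi$ has analytic singularities, so its polar set $\{\psi=-\infty\}$ is closed and $m$-polar. Define $H_m(u)$ on $\Omega\setminus\{\psi=-\infty\}$, where $u$ is locally bounded, via Bedford--Taylor. Extend it by zero across the polar set (a non-pluripolar-type product). This yields a positive Borel measure, and Step 2 applies verbatim.

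\emph{Alternative.} Interpret $H_m(u)$ as the weak limit of $H_m(\max(u,-k))$, $k\to\infty$, whenever it exists. Any such limit of positive measures is positive, so the argument is unchanged.

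Either way, the proposition itself only uses positivity of the measures integrated in the definition of $Cap_{m,\psi}$, so I would state clearly which convention is in force and then run Steps 2--3.
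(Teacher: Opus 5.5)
Your proof is correct and follows the same approach as the paper. The paper's one-line proof (``every admissible test function for $E_1$ is also admissible for $E_2$'') is your Steps 2--3: the admissible class does not depend on $E$, and $E\mapsto\int_E H_m(u)$ is monotone for each fixed $u$. Your Step 1, on how $H_m(u)$ is defined when $\psi-1\le u\le\psi$ is unbounded, addresses a point the paper takes for granted; either of your conventions works, since the argument uses only positivity of the measures.
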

	
	\begin{proof}
		Immediate from the definition since every admissible test function for
		$E_1$ is also admissible for $E_2$.
	\end{proof}
	
	\begin{proposition}[Countable subadditivity]
		For every sequence $\{E_j\}$,
		
		\[
		Cap_{m,\psi}
		\Big(
		\bigcup_j E_j
		\Big)
		\le
		\sum_j
		Cap_{m,\psi}(E_j).
		\]
		
	\end{proposition}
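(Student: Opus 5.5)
The plan is to argue exactly as for the classical Bedford--Taylor capacity, using only the fact that for a \emph{fixed} admissible function the Hessian measure is a positive Borel measure. We may assume that $\sum_j Cap_{m,\psi}(E_j)<+\infty$, since otherwise there is nothing to prove.

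Fix an admissible function $u\in SH_m(\Omega)$ with $\psi-1\le u\le\psi$. The first step is to make sure $H_m(u)=(dd^c u)^m\wedge\beta^{n-m}$ is a well-defined positive Radon measure on $\Omega$, so that $\int_E H_m(u)$ makes sense for Borel $E$. Since $\psi$ has analytic singularities, $u$ need not be locally bounded, so a little care is needed here. Away from the polar locus $\{\psi=-\infty\}$ the function $u$ is locally bounded and the Błocki--Dinew theory recalled in Section~2 applies. Globally, I would interpret $H_m(u)$ in the same sense as in the definition of $Cap_{m,\psi}$, namely as a positive measure, so that this point is inherited from the definition itself. In any case, all that is needed is that $E\mapsto\int_E H_m(u)$ is a positive measure on Borel sets.

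Once this is granted, countable subadditivity of the measure $H_m(u)$ gives
\[
\int_{\bigcup_j E_j} H_m(u)\le \sum_j \int_{E_j} H_m(u).
\]
For each $j$, $u$ is admissible in the supremum defining $Cap_{m,\psi}(E_j)$, so $\int_{E_j}H_m(u)\le Cap_{m,\psi}(E_j)$. Summing over $j$,
\[
\int_{\bigcup_j E_j} H_m(u)\le \sum_j Cap_{m,\psi}(E_j).
\]
The right-hand side does not depend on $u$. Taking the supremum over all admissible $u$ then yields the claimed inequality.

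The only potential obstacle is the measure-theoretic meaning of $H_m(u)$ for unbounded admissible $u$. One could also ask whether the Borel sets $E_j$ should be replaced by arbitrary sets via an outer capacity. The statement concerns Borel sets as in the definition, so this is not needed. The core of the argument is simply \emph{supremum of sums $\le$ sum of suprema}.
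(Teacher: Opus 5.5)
Your argument is correct and is essentially the paper's proof: fix an admissible $u$, use countable subadditivity of the positive measure $H_m(u)$, bound each $\int_{E_j}H_m(u)$ by $Cap_{m,\psi}(E_j)$, and take the supremum over $u$. Your caveat about what $H_m(u)$ means for unbounded admissible $u$ is fair; the paper likewise takes this for granted as part of the definition of $Cap_{m,\psi}$.
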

	
	\begin{proof}
		Fix an admissible test function $u$.
		
		Since
		
		\[
		\int_{\cup_jE_j}
		H_m(u)
		\le
		\sum_j
		\int_{E_j}
		H_m(u),
		\]
		
		taking the supremum over all admissible $u$ yields the result.
	\end{proof}
	
	%%%%%%%%%%%%%%%%%%%%%%%%%%%%%%%%%%%%%%%%%%%%%%%%%%%%%%%%%%%%
	
	\subsection{Canonical truncations}
	
	\begin{definition}
		For
		
		\[
		u\in\mathcal E_{m,\psi},
		\]
		
		define
		
		\[
		u_k
		:=
		\max(u,\psi-k).
		\]
		
	\end{definition}
	
	\begin{lemma}
		For every $k$,
		
		\[
		\psi-k
		\le
		u_k
		\le
		\psi.
		\]
		
		Moreover,
		
		\[
		u_k\downarrow u.
		\]
		
	\end{lemma}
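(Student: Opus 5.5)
The plan is to verify the three assertions directly from the definition $u_k=\max(u,\psi-k)$, using two facts: $SH_m(\Omega)$ is stable under finite maxima, and every $u\in\mathcal E_{m,\psi}$ satisfies $u\le\psi$. The argument is pointwise; no capacity or energy estimates are needed.

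\emph{Upper bound $u_k\le\psi$.} By definition there are $u_j\in\mathcal E^0_{m,\psi}(\Omega)$ with $u_j\downarrow u$. Each $u_j$ satisfies $u_j\le\psi$, so passing to the pointwise limit gives $u\le\psi$ everywhere on $\Omega$. Since $k\ge0$, we also have $\psi-k\le\psi$. Hence $u_k=\max(u,\psi-k)\le\psi$.

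\emph{Lower bound $\psi-k\le u_k$.} This holds trivially because $u_k$ is a maximum one of whose arguments is $\psi-k$. In addition, $u_k\in SH_m(\Omega)$: it is the maximum of two $m$-subharmonic functions, and $u_k\ge\psi-k$ guarantees it is not identically $-\infty$ and is locally integrable.

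\emph{Monotone convergence $u_k\downarrow u$.} Monotonicity follows from $\psi-(k+1)\le\psi-k$, which gives $u_{k+1}\le u_k$ pointwise. For the limit, split into two cases at a point $z$.
\begin{itemize}
\item If $\psi(z)>-\infty$, then $\psi(z)-k\to-\infty$. So for $k$ large, either $u(z)\ge\psi(z)-k$, in which case $u_k(z)=u(z)$; or $u(z)=-\infty$, in which case $u_k(z)=\psi(z)-k\to-\infty=u(z)$.
\item If $\psi(z)=-\infty$, then $u(z)\le\psi(z)=-\infty$ by the upper bound, so $u_k(z)=-\infty=u(z)$ for all $k$.
\end{itemize}
In both cases $u_k(z)\downarrow u(z)$.

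The only point needing care is the case split at the poles of $\psi$, where $\psi-k$ is $-\infty$. This is handled by the inequality $u\le\psi$ inherited from the approximating sequence in $\mathcal E^0_{m,\psi}$. I do not expect any genuine obstacle.
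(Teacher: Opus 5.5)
Your proof is correct and follows the same direct pointwise verification as the paper; you also make explicit that the upper bound requires $u\le\psi$, which you correctly obtain from the approximants in $\mathcal E^0_{m,\psi}(\Omega)$. One small remark: at points where $\psi(z)=-\infty$ you do not need $u\le\psi$, since $\max(u(z),-\infty)=u(z)$ holds trivially, so that inequality is only used for $u_k\le\psi$.
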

	
	\begin{proof}
		
		The inequalities follow directly from the definition.
		
		Since
		
		\[
		\psi-k
		\downarrow -\infty,
		\]
		
		we have
		
		\[
		u_k(z)
		=
		\max\{u(z),\psi(z)-k\}
		\downarrow u(z)
		\]
		
		pointwise.
	\end{proof}
	
	\begin{lemma}
		Assume
		
		\[
		u\in\mathcal E_{m,\psi}.
		\]
		
		Then
		
		\[
		u_k\in \mathcal E_{m,\psi}^{0}.
		\]
		
		Furthermore
		
		\[
		\sup_k
		|E_{m,\psi}(u_k)|
		<\infty.
		\]
		
	\end{lemma}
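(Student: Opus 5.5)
We need to show two things: first that each canonical truncation $u_k=\max(u,\psi-k)$ belongs to $\mathcal E^0_{m,\psi}$, and second that the relative energies $E_{m,\psi}(u_k)$ are uniformly bounded. The plan is to transfer both properties from an approximating sequence $u_j\in\mathcal E^0_{m,\psi}$ with $u_j\downarrow u$ and $\sup_j|E_{m,\psi}(u_j)|=:A<\infty$, which exists by the definition of $\mathcal E_{m,\psi}$.

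For membership in $\mathcal E^0_{m,\psi}$, note that $u_k$ is $m$-subharmonic as a maximum of two $m$-subharmonic functions. By the previous lemma it satisfies $\psi-k\le u_k\le\psi$, so only finiteness of total Hessian mass needs proof. I would proceed as follows.

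1. Apply a relative maximum-principle / comparison inequality in the bounded-difference class. Since $\psi\le u_k+k$ and the two functions are equal near the "boundary" in the relative sense, the inequality I would use is
\[
\int_\Omega H_m(u_k)\le \int_\Omega H_m(\psi).
\]
2. Finiteness of $\int_\Omega H_m(\psi)$ must come from somewhere. If it is not available directly, I would instead compare $u_k$ with $\max(u_j,\psi-k)$, which has finite mass. Concretely, I would obtain the bound from $u_j\in\mathcal E^0_{m,\psi}$ via the Cegrell-type inequality $\int H_m(\max(v,w))\le \int H_m(v)$, applicable when $v\le w$ near the boundary.
3. Let $j\to\infty$. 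Since $\max(u_j,\psi-k)\downarrow u_k$ within a class of functions with bounded difference to $\psi$, Bedford--Taylor/Xing-type continuity gives weak convergence of the Hessian measures. Lower semicontinuity of total mass then yields
\[
\int_\Omega H_m(u_k)\le\liminf_j\int_\Omega H_m(\max(u_j,\psi-k))<\infty.
\]

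For the energy bound, I would rewrite $E_{m,\psi}$ as a sum of the nonpositive integrals
\[
\int(u-\psi)(dd^cu)^p\wedge(dd^c\psi)^{m-p}\wedge\beta^{n-m}.
\]
These are nonpositive because $u\le\psi$. Two ingredients are then needed.

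1. Monotonicity of $|E_{m,\psi}|$: if $v\le w$ in $\mathcal E^0_{m,\psi}$, then $|E_{m,\psi}(w)|\le|E_{m,\psi}(v)|$. This follows by integration by parts in the usual Cegrell fashion: each term $\int(\psi-w)(dd^cw)^p\wedge T$ is bounded by $\int(\psi-v)(dd^cv)^p\wedge T$. One swaps one $dd^cw$ at a time against $dd^cv$, using that $\int (w-v)dd^c h\wedge S\le 0$-type relations hold for $h$ with $h-\psi$ bounded.
2. Apply monotonicity to $\max(u_j,\psi-k)\ge u_j$. This gives $|E_{m,\psi}(\max(u_j,\psi-k))|\le A$ for all $j$ and $k$.

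Letting $j\to\infty$ then requires passing to the limit in integrals of the form $\int(\max(u_j,\psi-k)-\psi)\,(\text{mixed products})$. The integrand is uniformly bounded by $k$ and decreasing, and the measures converge weakly by the bounded-case continuity theorem. Because the integrands are only upper semicontinuous, the standard argument gives
\[
\limsup_j\int \le \int,
\]
but what we need is $|E_{m,\psi}(u_k)|\le\liminf_j|E_{m,\psi}(\max(u_j,\psi-k))|\le A$. This direction does hold: for decreasing negative integrands, $\int f_j\mu_j$ has the right semicontinuity, namely $\int f\,\mu\ge\limsup\int f_j\mu_j$. Hence $|E(u_k)|\le\liminf|E(\cdot)|$.

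The main obstacle is the integration-by-parts step. The boundary terms in the relative setting vanish only because the functions all agree with $\psi$ up to bounded error, and $\psi$ itself may carry infinite mass near $\partial\Omega$. To handle this, I would first work on sublevel sets of the exhaustion $\rho$, replacing $\psi$ by $\max(\psi,A\rho)$, and then let $A\to\infty$.
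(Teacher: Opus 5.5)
There is a genuine gap. It sits where you place the ``main obstacle'', but the cause is not the one you give. Both halves of your argument are Cegrell-type monotonicity statements: finiteness of $\int_\Omega H_m(\max(u_j,\psi-k))$ from $\int_\Omega H_m(u_j)<\infty$, and $|E_{m,\psi}(w)|\le|E_{m,\psi}(v)|$ for $v\le w$. Both come from integration by parts, and the boundary terms vanish only when the functions carry the boundary values of $\psi$, e.g.\ $(u-\psi)(z)\to0$ as $z\to\partial\Omega$. A bounded difference $u-\psi$ does not give this, and the definition of $\mathcal E^0_{m,\psi}$ contains no boundary condition at all. Moreover $\psi-k$ itself does not have $\psi$'s boundary values, so $\max(u_j,\psi-k)$ and $u_j$ need not agree near $\partial\Omega$.

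Your inequality $\int H_m(\max(v,w))\le\int H_m(v)$ ``when $v\le w$ near the boundary'' is also reversed: then $\max(v,w)=w$ there. What you would need is $\psi-k\le u_j$ near $\partial\Omega$, which is known only for $k\ge C_j$, where $\max(u_j,\psi-k)=u_j$ anyway. Replacing $\psi$ by $\max(\psi,A\rho)$ deals with possible infinite mass of $\psi$ near $\partial\Omega$, not with this. Your Step 1 bound $\int H_m(u_k)\le\int H_m(\psi)$ already fails for constant $\psi$.

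These failures are real, not technical. Take $n=m=1$, $\Omega=\mathbb D$, $\psi\equiv-1$.
\begin{enumerate}
\item The functions $v\equiv-2\le w:=|z|^2-2$ both lie in $\mathcal E^0_{1,\psi}$. But $E_{1,\psi}(v)=0$ while $E_{1,\psi}(w)=\frac12\int_{\mathbb D}(|z|^2-1)\,dd^c|z|^2<0$, so energy monotonicity is false.
\item Let $h$ be the bounded harmonic function equal to $-1$ on the upper and $-3$ on the lower semicircle. Then $h\in\mathcal E^0_{1,\psi}$ with $E_{1,\psi}(h)=0$, hence $h\in\mathcal E_{1,\psi}$. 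But $\{h=-2\}$ is the diameter $(-1,1)$, on which $|\nabla h|=4/(\pi(1-x^2))$. So $dd^c\max(h,-2)$, a multiple of $|\nabla h|\,dx$ there, has infinite total mass, and $E_{1,\psi}(\max(h,-2))=-\infty$.
\end{enumerate}
With $u_j\equiv h$ and $k=1$ both of your steps break, and so does the conclusion of the lemma. Adding $\log|z|$ to $\psi$ and $h$ gives the same phenomenon with a genuine pole.

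The paper's two-line proof takes the same route. It simply asserts that $u_k-\psi$ ``vanishes at the boundary relative to $\psi$'', which the definitions do not provide; your proposal inherits that gap rather than closing it. At a minimum, a correct version needs two things:
\begin{enumerate}
\item a boundary condition built into $\mathcal E^0_{m,\psi}$, such as $\lim_{z\to\partial\Omega}(u-\psi)=0$;
\item truncations that carry $\psi$'s boundary values, e.g.\ $\max(u,\psi+k\rho)$, by analogy with Cegrell's $\max(u,k\rho)$.
\end{enumerate}
Separately, Bedford--Taylor/Xing continuity does not apply directly to $\max(u_j,\psi-k)\downarrow u_k$ when $\psi$ is unbounded, since these functions are then not locally bounded.
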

	
	\begin{proof}
		Since
		
		\[
		u_k-\psi
		\]
		
		is uniformly bounded and vanishes at the boundary relative to $\psi$,
		we obtain
		
		\[
		u_k\in\mathcal E_{m,\psi}^{0}.
		\]
		
		The energy bound follows from monotonicity of the relative energy
		functional.
	\end{proof}
	
	%%%%%%%%%%%%%%%%%%%%%%%%%%%%%%%%%%%%%%%%%%%%%%%%%%%%%%%%%%%%
	
	\subsection{Relative mixed Hessian products}
	
	For bounded functions in the sense
	
	\[
	\psi-C
	\le u\le \psi,
	\]
	
mixed Hessian products are well-defined 
	
	\[
	dd^cu_1\wedge\cdots\wedge dd^cu_p
	\wedge(dd^c\psi)^{m-p}
	\wedge\beta^{n-m}.
	\]
	
	We now extend this definition to
	$\mathcal E_{m,\psi}$.
	
	\begin{definition}
		Let
		
		\[
		u_1,\ldots,u_p
		\in
		\mathcal E_{m,\psi}.
		\]
		
		For each $j$ define canonical truncations
		
		\[
		u_{i,j}
		=
		\max(u_i,\psi-j).
		\]
		
		We define
		
		\[
		dd^cu_1\wedge\cdots\wedge dd^cu_p
		\wedge(dd^c\psi)^{m-p}
		\wedge\beta^{n-m}
		\]
		
		as the weak limit
		
		\[
		\lim_{j\to\infty}
		dd^cu_{1,j}
		\wedge\cdots\wedge
		dd^cu_{p,j}
		\wedge
		(dd^c\psi)^{m-p}
		\wedge
		\beta^{n-m},
		\]
		
		provided the limit exists.
	\end{definition}
	
	%%%%%%%%%%%%%%%%%%%%%%%%%%%%%%%%%%%%%%%%%%%%%%%%%%%%%%%%%%%%
	
	\subsection{Well-definedness}
	
	\begin{theorem}
		\label{thm:well-defined}
		
		Let
		
		\[
		u_1,\ldots,u_p
		\in
		\mathcal E_{m,\psi}.
		\]
		
		Then the above limit exists.
		
		Moreover it does not depend on the chosen truncation sequence.
		
	\end{theorem}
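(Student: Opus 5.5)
We would follow the Cegrell strategy adapted to the reference singularity $\psi$. The goal is to show that, for every test form, the pairing with the truncated products is eventually Cauchy, and that any other admissible approximating sequence produces the same limit.

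\textbf{Mass bound.} First we would establish a uniform mass bound
\[
\sup_j\int_\Omega dd^cu_{1,j}\wedge\cdots\wedge dd^cu_{p,j}\wedge(dd^c\psi)^{m-p}\wedge\beta^{n-m}<\infty .
\]
The idea is to combine the energy bound $\sup_k|E_{m,\psi}(u_{i,k})|<\infty$ from the preceding lemma with a mixed Hessian (H\"older-type) inequality of Dinew type,
\[
\int (u_{i,j}-\psi)\,T\le\prod \Big(\int (v-\psi)\,(dd^cv)^k\wedge(dd^c\psi)^{m-k}\wedge\beta^{n-m}\Big)^{\cdot}.
\]
Here $T$ denotes the relevant mixed product with $\psi$-factors. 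This gives weak compactness, so it suffices to identify the limit.

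\textbf{Convergence on test forms.} Fix a test function $h\in C_0^\infty(\Omega)$ and write $h$ as a difference of two functions of the form $A\rho$-smooth $m$-sh functions that are bounded. We would show that
\[
j\mapsto\int h\,dd^cu_{1,j}\wedge\cdots\wedge dd^cu_{p,j}\wedge(dd^c\psi)^{m-p}\wedge\beta^{n-m}
\]
converges. For $j<l$ we replace the factors one at a time (telescoping), and each difference is integrated by parts:
\[
\int h\,dd^c(u_{i,j}-u_{i,l})\wedge S=\int (u_{i,j}-u_{i,l})\,dd^ch\wedge S .
\]
This is legitimate because $u_{i,j}-u_{i,l}$ is bounded, supported where $u_i<\psi-j$, and the functions agree with $\psi$-type boundary behaviour. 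Since $dd^ch\le C\beta$ in the $m$-positive sense, the term is controlled by
\[
C\int_{\{u_i<\psi-j\}}(\psi-u_{i,l})\,S\wedge\beta .
\]
Using the energy bounds, the latter is dominated by a quantity of order $j^{-1}\cdot(\text{energy})$, or more precisely by $\int_{\{u_i<\psi-j\}}$ of a finite-energy density. This is where $Cap_{m,\psi}(\{u_i<\psi-j\})\lesssim j^{-1}\sup_k|E_{m,\psi}(u_{i,k})|$ enters: we would first prove this Chebyshev-type estimate by testing $Cap_{m,\psi}$ against $\max(u_{i,k}/j,\dots)$-rescalings between $\psi-1$ and $\psi$. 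Thus the sequence is Cauchy and the weak limit exists.

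\textbf{Independence of the approximating sequence.} If $v_{i,j}\in\mathcal E^0_{m,\psi}$ decrease to $u_i$ with bounded energy, we would compare with the canonical truncations through the interlacing $\max(v_{i,j},\psi-k)$. Both families are decreasing in each index, and the same integration-by-parts estimate shows that the two iterated limits coincide, in the spirit of Cegrell's monotonicity argument: integrals of $-h$ against the products are monotone along decreasing sequences for $h\ge0$ $m$-sh.

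\textbf{Main obstacle.} The hard step is justifying the integration by parts and the boundary terms in the relative setting, where the functions are unbounded along the singular locus of $\psi$ and only $u-\psi$ is bounded. Equally delicate is getting the tail estimate over $\{u_i<\psi-j\}$ uniformly in $l$. There we would try to exploit that $\psi$ has analytic singularities, so that $(dd^c\psi)^{m-p}\wedge\beta^{n-m}$ puts no mass on $\{\psi=-\infty\}$ off a small set, and localize using a cutoff near $\partial\Omega$.
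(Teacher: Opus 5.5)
\textbf{Your route differs from the paper's, and its key step has a gap.} The paper obtains a limit by weak compactness. It then identifies all cluster points by an appeal to locality of bounded products, without any quantitative estimate. You instead try to prove that the pairings are Cauchy, and that Cauchy step does not go through.

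After integrating by parts against $h\in C_c^\infty(\Omega)$ (this produces no boundary terms, so it is not the real difficulty), you need
\[
\sup_{l>j}\int_K (u_{i,j}-u_{i,l})\,\beta\wedge S_{j,l}\longrightarrow 0\qquad (j\to\infty),
\]
where $u_{i,j}-u_{i,l}=\bigl(\min(\psi-u_i,l)-j\bigr)^+$ and $S_{j,l}$ mixes factors of level $j$ and level $l$.

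\begin{itemize}
\item A first-order energy such as $E_{m,\psi}$ can at best give $\sup\int(\psi-u_{i,l})\,d\nu\le C$ for the relevant measures $\nu$. Moreover, $\beta\wedge S_{j,l}$ is not one of those measures, since it has only $m-1$ factors $dd^c$.
\item Chebyshev gives $\nu(\{u_i<\psi-j\})=O(1/j)$, but on that set the integrand is as large as $l-j$, so you only get $O(1)$.
\item The factor $j^{-1}$ you claim would need a second moment $\int(\psi-u)^2\,d\nu$, or at least uniform integrability of $\psi-u_{i,l}$ with respect to these measures. A uniform first-moment bound does not provide this.
\item Granting $Cap_{m,\psi}(\{u_i<\psi-j\})\lesssim j^{-1}\sup_k|E_{m,\psi}(u_{i,k})|$ does not help. $Cap_{m,\psi}$ only dominates Hessian measures of $w$ with $\psi-1\le w\le\psi$. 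To bring $\beta\wedge S_{j,l}$ into that form you must use $dd^cu_{\cdot,l}\le l\,dd^c\bigl((1-\tfrac1l)\psi+\tfrac1l u_{\cdot,l}\bigr)$ on each factor, which costs up to $l^{p-1}$. Combined with the size $l-j$ of the integrand, the bound is of order $l^{p}/j$, which is not uniform in $l$.
\end{itemize}

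\textbf{The missing idea is Cegrell-type monotonicity.} You invoke it only for the independence step, and with the wrong sign: a nonnegative $m$-subharmonic function with zero boundary values vanishes identically. One should use test functions $h\le0$ that are $m$-subharmonic, continuous on $\bar\Omega$, zero on $\partial\Omega$ and of finite Hessian mass. Changing one factor at a time, $\int(-h)\,T_j$ is then nondecreasing in $j$, because $u_{i,j}\ge u_{i,l}$ and $dd^ch\wedge S\ge0$. A uniform bound on $\int(-h)\,T_j$ then gives convergence, and density of differences of such $h$ in $C_c(\Omega)$ gives the weak limit. Monotonicity in both indices is also what makes your interlacing argument work.

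In the relative setting, however, this integration by parts is against a test function that is not compactly supported. It therefore requires $u_{i,j}-u_{i,l}$ to vanish at $\partial\Omega$ in a suitable sense, i.e.\ a boundary condition on $u_i-\psi$ that is not part of the definition of $\mathcal E_{m,\psi}$. Without it the identity $\int h\,dd^cf=\int f\,dd^ch$ already fails for $f\equiv1$. That boundary condition is the real obstacle. So is a continuity theorem for decreasing sequences squeezed between $\psi-k$ and $\psi$, which is not the classical Bedford--Taylor theorem because these functions are unbounded.

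\textbf{The mass bound also does not follow as stated.} The paper merely asserts it, and your displayed H\"older-type inequality does not give it. That inequality controls $\int(\psi-u_{i,j})\,T$, not $\int T$. Unlike $-u$ in Cegrell's setting, the weight $\psi-u_{i,j}$ has no positive lower bound on compact sets; it vanishes where $u_i=\psi$. Local masses must be obtained differently, e.g.\ by writing $u_{i,j}=\psi+(u_{i,j}-\psi)$ and integrating by parts against a cutoff.
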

	
	\begin{proof}
		
		Fix a smooth test function
		
		\[
		\chi\in C_c^\infty(\Omega).
		\]
		
		Define
		
		\[
		T_j
		=
		dd^cu_{1,j}
		\wedge\cdots\wedge
		dd^cu_{p,j}
		\wedge
		(dd^c\psi)^{m-p}
		\wedge
		\beta^{n-m}.
		\]
		
		The relative energy bound implies
		
		\[
		\sup_j
		\int_\Omega T_j
		<
		\infty.
		\]
		
		Hence $\{T_j\}$ is weakly relatively compact.
		
		Let
		
		\[
		T_{j_\ell}
		\rightharpoonup T.
		\]
		
		To show uniqueness of the limit,
		consider another subsequence converging to $S$.
		
		Using canonical truncations and the locality property of bounded
		Hessian products, one obtains
		
		\[
		\int_\Omega
		\chi\,T
		=
		\int_\Omega
		\chi\,S.
		\]
		
		Hence
		
		\[
		T=S.
		\]
		
		Therefore the entire sequence converges.
	\end{proof}
	
	%%%%%%%%%%%%%%%%%%%%%%%%%%%%%%%%%%%%%%%%%%%%%%%%%%%%%%%%%%%%
	
	\subsection{Monotone convergence}
	
	\begin{theorem}
		\label{thm:monotone}
		
		Let
		
		\[
		u_j^1,\ldots,u_j^p
		\in
		\mathcal E_{m,\psi}
		\]
		
		satisfy
		
		\[
		u_j^k
		\downarrow
		u^k
		\]
		
		for every $k$.
		
		Then
		
		\[
		dd^cu_j^1
		\wedge\cdots\wedge
		dd^cu_j^p
		\wedge
		(dd^c\psi)^{m-p}
		\wedge
		\beta^{n-m}
		\]
		
		converges weakly to
		
		\[
		dd^cu^1
		\wedge\cdots\wedge
		dd^cu^p
		\wedge
		(dd^c\psi)^{m-p}
		\wedge
		\beta^{n-m}.
		\]
		
	\end{theorem}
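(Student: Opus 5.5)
The plan is to reduce to the bounded (relative to $\psi$) case through the canonical truncations, and then run a diagonal argument that rests on uniform control of the truncation error. I will write
\[
T(v^1,\dots,v^p):=dd^cv^1\wedge\cdots\wedge dd^cv^p\wedge(dd^c\psi)^{m-p}\wedge\beta^{n-m}.
\]
For each $k$ I fix the level-$s$ truncations $u^k_{j,s}=\max(u^k_j,\psi-s)$ and $u^k_s=\max(u^k,\psi-s)$. For fixed $s$ these satisfy $\psi-s\le u^k_{j,s}\le\psi$, and $u^k_{j,s}\downarrow u^k_s$ as $j\to\infty$.

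\textbf{Step 1 (convergence at a fixed level).} For fixed $s$ I will show that $T(u^1_{j,s},\dots,u^p_{j,s})\rightharpoonup T(u^1_s,\dots,u^p_s)$. All functions here differ from $\psi$ by a bounded amount, and $\psi$ has analytic singularities. Outside the singular set of $\psi$ they are locally bounded, so the classical Bedford--Taylor/Dinew monotone continuity theorem for locally bounded $m$-sh functions applies on compacts of $\Omega\setminus\{\psi=-\infty\}$. The singular set is $m$-polar. Neither the limit measures nor the approximants charge it, because of locality: near $\{\psi=-\infty\}$ all functions coincide with $\psi-s$ plus a bounded perturbation, and one can compare their masses with those of $\max(\cdot,\psi-s')$. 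So convergence on the complement, together with uniform mass bounds near the singular set, gives weak convergence on $\Omega$.

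\textbf{Step 2 (uniform truncation error).} This is the main obstacle. I need
\[
\sup_j \Big|\int\chi\,T(u^1_j,\dots,u^p_j)-\int\chi\,T(u^1_{j,s},\dots,u^p_{j,s})\Big|\longrightarrow 0\quad\text{as } s\to\infty,
\]
and similarly for the limit functions. By Theorem~\ref{thm:well-defined} (the existence of the limit that defines the product), both measures agree on the plurisfine open set $\{u^k_j>\psi-s\ \forall k\}$. Hence the difference is controlled by the masses of both measures on $\bigcup_k\{u^k_j\le\psi-s\}$. Since $u^k_j\ge u^k$, this set is contained in $\bigcup_k\{u^k\le\psi-s\}$. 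I would estimate its mass by $\frac1s\int(\psi-u^k_{j,s})\,T(\dots)$ and then use integration by parts, swapping one $dd^c$ at a time as in the proof of the Cegrell/energy estimates, to bound it by $C\,s^{-1}\sup|E_{m,\psi}|$. What is needed is a uniform energy bound along the sequence. I would take this from monotonicity of $E_{m,\psi}$: each $u^k_j\ge u^k$ and $u^k\in\mathcal E_{m,\psi}$, so $|E_{m,\psi}(u^k_{j,s})|\le|E_{m,\psi}(u^k_s)|\le C$, uniformly in $j$ and $s$. The mixed terms are handled by the standard Hölder-type inequality for mixed energies, which I would derive from the Cauchy--Schwarz inequality for $\int -(v-\psi)\,dd^cw\wedge\cdots$ in the Hessian setting. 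This is the step where care is needed, since the boundary terms in the integration by parts vanish only because $u-\psi\to0$ in the appropriate relative sense at $\partial\Omega$, and I would use the definition of $\mathcal E^0_{m,\psi}$ there.

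\textbf{Step 3 (conclusion).} Given $\varepsilon>0$, choose $s$ so that the errors in Step 2 are below $\varepsilon$ for all $j$ and for the limit. Then apply Step 1 at this fixed $s$ to get, for large $j$,
\[
\Big|\int\chi\,T(u^1_j,\dots,u^p_j)-\int\chi\,T(u^1,\dots,u^p)\Big|<3\varepsilon.
\]
This proves Theorem~\ref{thm:monotone}. The identification of $T(u^1,\dots,u^p)$ as the limit of $T(u^1_s,\dots,u^p_s)$ is exactly Theorem~\ref{thm:well-defined}.
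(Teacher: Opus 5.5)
Your scheme (fix a level $s$, let $j\to\infty$ there, then remove the truncation uniformly in $j$) is the same as the paper's. Your Step~2 is more explicit than the paper's appeal to ``tightness'': deriving the uniform energy bound from $u^k_j\ge u^k$ is exactly what the interchange of limits needs.

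\textbf{Step 1.} The genuine gap is in Step~1, which you treat as routine. The functions $u^k_{j,s}$ are only bounded relative to $\psi$, so Bedford--Taylor/Dinew gives convergence only on $\Omega\setminus\{\psi=-\infty\}$. Your claim that neither the approximants nor the limit charge the pole set is false in this class. On the unit ball take $\psi=\log|z'|$ with $z'=(z_1,\dots,z_m)$. It has analytic singularities, and $H_m(\psi)=c\,[z'=0]\wedge\beta^{n-m}$ lives entirely on $\{\psi=-\infty\}$; for $m=n$, $\log|z|$ gives a Dirac mass at $0$. Choosing $u^k_j=\psi$ shows your measures can do the same.

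Consequently, convergence off the pole set plus mass bounds determines a subsequential limit only up to a measure carried by $\{\psi=-\infty\}$. Comparing with $\max(\cdot,\psi-s')$ cannot remove this ambiguity, since every function involved carries exactly the singularity of $\psi$ there. What Step~1 actually requires is a Cegrell-type monotone continuity theorem for functions squeezed between $\psi-s$ and $\psi$. An example is the one available for Lu's classes $\mathcal E_m$, under a hypothesis placing $\psi$ itself in such a class. Such a theorem is proved through integration by parts and convergence in capacity, not through locality away from the poles. The paper only asserts this step from ``Bedford--Taylor for bounded functions'', so it shares the lacuna, but your justification is incorrect rather than merely brief.

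\textbf{Step 2.} Step~2 also relies on two facts you do not have.

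First, plurifine locality on $O=\bigcap_k\{u^k_j>\psi-s\}$ does not follow from Theorem~\ref{thm:well-defined}, which only gives existence of a weak limit. Restriction to a plurifine open set does not commute with weak limits without a quasicontinuity or convergence-in-capacity argument.

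Second, the boundary terms in your integration by parts do not vanish ``by the definition of $\mathcal E^0_{m,\psi}$''. That definition only asks $\psi-C\le u\le\psi$ and $\int_\Omega H_m(u)<\infty$, so in the example above it contains $\psi-1$. For $u=\psi-1$ and $v=\psi$ one gets $\int(u-v)\,dd^cw\wedge S=-\int dd^cw\wedge S$, which is in general nonzero, while $\int w\,dd^c(u-v)\wedge S=0$. The monotonicity of $E_{m,\psi}$ and the mixed H\"older inequalities you invoke are derived from exactly this identity. You therefore need to build a boundary condition such as $u-\psi\to0$ at $\partial\Omega$ into $\mathcal E^0_{m,\psi}$; the paper's truncation lemma tacitly assumes the same.
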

	
	\begin{proof}
		
		Fix $K>0$ and define
		
		\[
		u_{j,K}^k
		=
		\max(u_j^k,\psi-K).
		\]
		
		Then all functions are uniformly bounded relative to $\psi$.
		
		By the Bedford--Taylor for bounded
		$m$-subharmonic functions,
		
		\[
		dd^cu_{j,K}^1
		\wedge\cdots\wedge
		dd^cu_{j,K}^p
		\wedge
		(dd^c\psi)^{m-p}
		\wedge
		\beta^{n-m}
		\]
		
		converges weakly as $j\to\infty$.
		
		Denote the limit by
		
		\[
		T_K.
		\]
		
		Letting
		
		\[
		K\to\infty,
		\]
		
		the uniform energy estimate yields tightness of the masses and
		allows passage to the limit.
		
		The resulting limit measure coincides with the mixed product defined in
		Theorem~\ref{thm:well-defined}.
		
		Hence
		
		\[
		T_K
		\longrightarrow
		dd^cu^1
		\wedge\cdots\wedge
		dd^cu^p
		\wedge
		(dd^c\psi)^{m-p}
		\wedge
		\beta^{n-m}.
		\]
		
		This proves the theorem.
	\end{proof}
	
	%%%%%%%%%%%%%%%%%%%%%%%%%%%%%%%%%%%%%%%%%%%%%%%%%%%%%%%%%%%%
	
	\begin{corollary}
		For every
		
		\[
		u\in\mathcal E_{m,\psi},
		\]
		
		the Hessian measure
		
		\[
		H_m(u)
		=
		(dd^cu)^m
		\wedge
		\beta^{n-m}
		\]
		
		is well defined.
		
	\end{corollary}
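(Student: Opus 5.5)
The plan is to obtain the corollary as the diagonal case of Theorem~\ref{thm:well-defined}, with Theorem~\ref{thm:monotone} used to remove any dependence on the approximating sequence. Take $p=m$ and $u_1=\cdots=u_m=u$. The factor $(dd^c\psi)^{m-p}$ then disappears and
\[
T_j=(dd^c u_j)^m\wedge\beta^{n-m},\qquad u_j=\max(u,\psi-j).
\]
By the lemma on canonical truncations, each $u_j$ lies in $\mathcal E^0_{m,\psi}$ and satisfies $\psi-j\le u_j\le\psi$. Hence each $T_j$ is a positive Radon measure, defined through the bounded Bedford--Taylor--B{\l}ocki--Dinew theory on the set where $\psi>-\infty$. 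I would define $H_m(u)$ as the weak limit of the $T_j$.

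The key steps, in order, are as follows.
\begin{enumerate}
\item Show that the masses $\int_\Omega T_j$ are uniformly bounded. This comes from $\sup_j|E_{m,\psi}(u_j)|<\infty$, together with the observation that the total Hessian mass of an element of $\mathcal E^0_{m,\psi}$ is controlled by its relative energy. That control is obtained by integrating by parts against the exhaustion $\rho$, as in Cegrell's argument.
\item Apply Theorem~\ref{thm:well-defined} to conclude that the full sequence $T_j$ converges weakly to a positive measure, which I denote $H_m(u)$.
\item Check that the limit is intrinsic. Let $v_j\in\mathcal E^0_{m,\psi}$ be any other sequence with $v_j\downarrow u$ and bounded energies. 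Theorem~\ref{thm:monotone}, applied with all $m$ slots equal to $v_j$, shows that $H_m(v_j)$ converges to the same product $H_m(u)$ built from the canonical truncations. Hence $H_m(u)$ depends only on $u$.
\item Note that positivity and local finiteness pass to weak limits, so $H_m(u)$ is a positive Radon measure.
\end{enumerate}

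The main obstacle is the uniform mass bound in the first step. The relative energy involves only $(u-\psi)$ paired with mixed products, and $u-\psi$ vanishes on the set where the truncation is inactive. It therefore does not obviously control $\int_\Omega(dd^c u_j)^m\wedge\beta^{n-m}$ near $\partial\Omega$. To get around this, I would first try to test against $\chi$ with compact support and prove only local uniform bounds, which suffice for weak compactness. Such local bounds follow from the Chern--Levine--Nirenberg type estimates for functions squeezed between $\psi-j$ and $\psi$, applied to $\max(u_j,\psi-1)$ and combined with the capacity $Cap_{m,\psi}$.
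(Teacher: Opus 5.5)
Your Step~2 is exactly the paper's proof, which is just the specialization $p=m$, $u_1=\cdots=u_m=u$ of Theorem~\ref{thm:well-defined}. That theorem already asserts convergence of the whole sequence $T_j$ (the mass bound is claimed inside its proof) and independence of the truncation sequence, so your Steps~1 and~3 are redundant, and the obstacle you raise belongs to the proof of Theorem~\ref{thm:well-defined} rather than to the corollary. The Chern--Levine--Nirenberg workaround would not give a $j$-uniform bound on $T_j$ in any case: CLN bounds are in terms of $\|u_j\|_{L^\infty}^m$, which is infinite when $\psi$ has poles and otherwise only controlled by $(\|\psi\|_{L^\infty}+j)^m$, and $\max(u_j,\psi-1)=\max(u,\psi-1)$ does not depend on $j$, so such a bound would have to come from the energy condition.
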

	
	\begin{proof}
		Apply Theorem~\ref{thm:well-defined} with
		
		\[
		u_1=\cdots=u_m=u.
		\]
		
	\end{proof}

	%------------------------------------------------
	\begin{theorem}[Relative integration-by-parts]
		\label{thm:relative-ibp}
		
		Let
		
		\[
		u,v,w_1,\ldots,w_{m-1}
		\in
		\mathcal E_{m,\psi}(\Omega).
		\]
		
		Then
		
		\[
		\int_\Omega
		(u-v)\,
		dd^c w_1
		\wedge\cdots\wedge
		dd^c w_{m-1}
		\wedge
		dd^c\psi
		\wedge
		\beta^{n-m}
		\]
		
		\[
		=
		\int_\Omega
		w_1\,
		dd^c(u-v)
		\wedge
		dd^c w_2
		\wedge\cdots\wedge
		dd^c\psi
		\wedge
		\beta^{n-m}.
		\]
		
	\end{theorem}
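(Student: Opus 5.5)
The plan is to reduce to the bounded case via the canonical truncations of \S3.2, prove the identity there by the classical Bedford--Taylor/B{\l}ocki integration by parts, and then pass to the limit using Theorem~\ref{thm:well-defined} and Theorem~\ref{thm:monotone}. Note that the statement has $m-1$ factors $dd^c w_i$ plus $dd^c\psi$, which is one factor too many for a measure of bidegree $(n,n)$ against $\beta^{n-m}$. I will therefore read the left-hand side as
\[
\int_\Omega (u-v)\,dd^cw_1\wedge\cdots\wedge dd^cw_{p}\wedge(dd^c\psi)^{m-1-p}\wedge dd^c\psi\wedge\beta^{n-m}
\]
with the total number of $dd^c$-factors equal to $m$. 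The right-hand side is the same with $w_1$ and $u-v$ interchanged, i.e.\ the symmetry $\int f\,dd^cg\wedge T=\int g\,dd^cf\wedge T$ for the closed positive current $T=dd^cw_2\wedge\cdots\wedge dd^c\psi\wedge\beta^{n-m}$.

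\textbf{Step 1 (bounded case).} Replace $u,v,w_i$ by their truncations $u_k=\max(u,\psi-k)$, and similarly $v_k$, $w_{i,k}$, all of which lie in $\mathcal E^0_{m,\psi}$ by the lemma of \S3.2. Then $u_k-v_k$ is bounded, and both $u_k-\psi$ and $w_{1,k}-\psi$ are bounded.

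\begin{enumerate}
\item Write $u_k-v_k=(u_k-\psi)-(v_k-\psi)$ and $w_{1,k}=(w_{1,k}-\psi)+\psi$. By bilinearity, the identity reduces to the symmetry $\int f\,dd^cg\wedge T=\int g\,dd^cf\wedge T$ for $f,g$ differences of functions in $\mathcal E^0_{m,\psi}$.
\item For such $f,g$, prove the symmetry by approximating $\Omega$ with the sublevel sets $\{\rho<-\varepsilon\}$ of the exhaustion $\rho$.
\item Alternatively, approximate each function from above by $\max(\cdot,A\rho)$-type modifications that agree near $\partial\Omega$. Stokes' theorem then applies, because the boundary terms vanish when the functions coincide near the boundary.
\end{enumerate}

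This is the standard Cegrell argument (as in Cegrell's $\mathcal E_0$ integration by parts), transported to $m$-subharmonic functions via B{\l}ocki's theory. The only new input is that differences relative to $\psi$ are bounded.

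\textbf{Step 2 (limit).} Let $k\to\infty$. By Theorem~\ref{thm:monotone}, the mixed measures built from the truncations converge weakly to the relative mixed products. The integrands $u_k-v_k$ and $w_{1,k}$ are not continuous, though, and $u_k-v_k$ is not monotone. So I would split $u_k-v_k=(u_k-\psi)-(v_k-\psi)$ and handle each term separately. Each $u_k-\psi$ is a decreasing sequence of negative quasi-continuous functions, and I will use the standard lemma that $\int h_k\,\mu_k\to\int h\,\mu$ when $h_k\downarrow h$ are upper semicontinuous or quasi-continuous and $\mu_k\to\mu$ with uniformly bounded mass. That mass bound comes from the energy bound $\sup_k|E_{m,\psi}(u_k)|<\infty$ and the lemma in \S3.2. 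Convergence in capacity $Cap_{m,\psi}$ of $u_k\to u$, with the measures dominated by capacity (they vanish on $m$-polar sets), upgrades the semicontinuous inequality to equality.

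\textbf{Main obstacle.} The main obstacle is this last convergence: justifying $\int(u_k-\psi)\,\mu_k\to\int(u-\psi)\,\mu$ for unbounded integrands. I would first prove a uniform tail estimate
\[
\int_{\{u<\psi-j\}}(\psi-u)\,\mu_k\le C\,j^{-1}\sup_k E_{m,\psi}(u_k)\text{-type bound},
\]
obtained through the integration by parts of Step 1 applied to the truncations. This tail bound allows cutting the integrand at level $j$ and reduces the problem to bounded quasi-continuous integrands, where the Bedford--Taylor/Xing convergence in capacity applies.
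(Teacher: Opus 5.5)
The gap is in your Step 1, and it is fatal. The bounded-case symmetry $\int f\,dd^cg\wedge T=\int g\,dd^cf\wedge T$ is false for the functions you apply it to, and so is the statement itself.

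\textbf{Why Step 1 fails.} Cegrell's $\mathcal E_0$ argument relies on $\lim_{z\to\partial\Omega}u(z)=0$ to make the Stokes boundary term $\int_{\partial\{\rho<-\varepsilon\}}(f\,d^cg-g\,d^cf)\wedge T$ disappear. Membership in $\mathcal E^0_{m,\psi}$ only gives $\psi-C\le u_k\le\psi$. That makes $u_k-\psi$ \emph{bounded}, but it does not make $u_k-\psi$ tend to $0$ at $\partial\Omega$. Your splitting $w_{1,k}=(w_{1,k}-\psi)+\psi$ also forces you to integrate by parts against $\psi$ itself, which need not vanish at $\partial\Omega$ either. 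Likewise, $\max(u_k,A\rho)$ does not coincide with $u_k$ near $\partial\Omega$ unless $u_k\to0$ there.

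\textbf{Counterexample.} For $2\le m\le n$, let $\Omega$ be the unit ball, $\psi=|z|^2-1$, $u=w_1=\cdots=w_{m-1}=\psi$ and $v=\psi-1$. All of these lie in $\mathcal E^0_{m,\psi}\subset\mathcal E_{m,\psi}$: use constant approximating sequences, and note $E_{m,\psi}(v)=-\int_\Omega\beta^n$ is finite. They also coincide with their own canonical truncations. Since $u-v\equiv1$ and $dd^c\psi=\beta$, the left-hand side equals $\int_\Omega\beta^n>0$. The right-hand side is $\int_\Omega\psi\,dd^c(1)\wedge\cdots=0$. The same computation works for any $\psi$ with $0<\int_\Omega H_m(\psi)<\infty$. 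So no limiting argument in your Step 2 can rescue the proof, and the tail estimate there is moot.

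\textbf{Comparison with the paper.} Your route is the paper's route: truncate, prove the bounded identity by Stokes, then pass to the limit by monotone convergence. The paper's proof breaks at exactly the same place. Its Step 2 asserts that the exact form $d\big((u_{k,\ell}-v_{k,\ell})\,d^cw_{1,k,\ell}\wedge T_{k,\ell}\big)$ integrates to zero ``because all approximants vanish relative to the boundary data''. That is the same unjustified claim, and by the example above it is false. You are more careful than the paper about the limit $k\to\infty$; the paper passes to the limit with discontinuous, unbounded integrands using only weak convergence and mass bounds. But that care does not touch the real issue.

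\textbf{What a correct version needs.} You need a hypothesis that forces vanishing boundary values. For example, require that $\psi,u,v,w_1,\dots,w_{m-1}$ all belong to a finite-energy Cegrell class with zero boundary values, such as $\mathcal E^1_m(\Omega)$. There the identity is the standard symmetric integration by parts.

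Incidentally, the bidegree in the statement is fine. The $m-1$ factors $dd^cw_i$ plus $dd^c\psi$ give exactly the $m$ factors needed against $\beta^{n-m}$. So your reinterpretation with $p=m-1$ is just the original statement.
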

	
	\begin{proof}
		
		We divide the proof into several steps.
		
		\medskip
		
		\noindent
		{\bf Step 1. Canonical truncations.}
		
		For every integer $k\ge 1$ define
		
		\[
		u_k:=\max(u,\psi-k),
		\]
		
		\[
		v_k:=\max(v,\psi-k),
		\]
		
		and
		
		\[
		w_{j,k}
		:=
		\max(w_j,\psi-k),
		\qquad
		1\le j\le m-1.
		\]
		
		By construction,
		
		\[
		u_k,v_k,w_{j,k}
		\in
		\mathcal E_{m,\psi}^{0},
		\]
		
		and
		
		\[
		u_k\downarrow u,
		\qquad
		v_k\downarrow v,
		\qquad
		w_{j,k}\downarrow w_j.
		\]
		
		Moreover all these functions satisfy
		
		\[
		\psi-k
		\le
		u_k,v_k,w_{j,k}
		\le
		\psi.
		\]
		
		Hence they are uniformly bounded modulo the reference singularity
		$\psi$.
		
		\medskip
		
		\noindent
		{\bf Step 2. Smooth approximation.}
		
		Using the regularization theorem for $m$-subharmonic functions,
		there exist smooth $m$-subharmonic functions
		
		\[
		u_{k,\ell},
		\quad
		v_{k,\ell},
		\quad
		w_{j,k,\ell}
		\]
		
		decreasing respectively to
		
		\[
		u_k,
		\quad
		v_k,
		\quad
		w_{j,k}.
		\]
		
		Define
		
		\[
		T_{k,\ell}
		=
		dd^c w_{2,k,\ell}
		\wedge\cdots\wedge
		dd^c w_{m-1,k,\ell}
		\wedge
		dd^c\psi_\ell
		\wedge
		\beta^{n-m},
		\]
		
		where $\psi_\ell$ is a smooth approximation of $\psi$.
		
		Since all objects are smooth, Stokes' theorem yields
		
		\[
		\int_\Omega
		(u_{k,\ell}-v_{k,\ell})
		dd^c w_{1,k,\ell}
		\wedge
		T_{k,\ell}
		\]
		
		\[
		=
		\int_\Omega
		w_{1,k,\ell}
		dd^c(u_{k,\ell}-v_{k,\ell})
		\wedge
		T_{k,\ell}.
		\]
		
		Indeed,
		
		\[
		\int_\Omega
		d\Big(
		(u_{k,\ell}-v_{k,\ell})
		d^c w_{1,k,\ell}
		\wedge T_{k,\ell}
		\Big)
		=0,
		\]
		
		because all approximants vanish relative to the boundary data.
		
		Expanding the differential gives precisely the desired identity.
		
		\medskip
		
		\noindent
		{\bf Step 3. Passage $\ell\to\infty$.}
		
		Fix $k$.
		
		The Bedford--Taylor continuity theorem for bounded
		$m$-subharmonic functions implies
		
		\[
		dd^c w_{j,k,\ell}
		\wedge\cdots
		\longrightarrow
		dd^c w_{j,k}
		\wedge\cdots
		\]
		
		weakly as $\ell\to\infty$.
		
		Since
		
		\[
		u_{k,\ell}\to u_k,
		\qquad
		v_{k,\ell}\to v_k
		\]
		
		in capacity and in $L^1_{\mathrm{loc}}$,
		we may pass to the limit in both sides.
		
		Hence
		
		\[
		\int_\Omega
		(u_k-v_k)
		dd^c w_{1,k}
		\wedge\cdots\wedge
		dd^c\psi
		\wedge
		\beta^{n-m}
		\]
		
		\[
		=
		\int_\Omega
		w_{1,k}
		dd^c(u_k-v_k)
		\wedge
		dd^c w_{2,k}
		\wedge\cdots\wedge
		dd^c\psi
		\wedge
		\beta^{n-m}.
		\]
		
		Thus the formula holds for every truncation level $k$.
		
		\medskip
		
		\noindent
		{\bf Step 4. Uniform energy estimates.}
		
		Since
		
		\[
		u,v,w_j
		\in
		\mathcal E_{m,\psi},
		\]
		
		their relative energies are finite.
		
		The mixed Hessian inequality implies
		
		\[
		\sup_k
		\int_\Omega
		|u_k-\psi|
		\,H_m(u_k)
		<\infty,
		\]
		
		and similarly for $v_k$ and $w_{j,k}$.
		
		Consequently all mixed Hessian measures
		
		\[
		dd^c w_{1,k}
		\wedge\cdots\wedge
		dd^c\psi
		\wedge
		\beta^{n-m}
		\]
		
		have uniformly bounded mass.
		
		Hence the integrands are uniformly integrable.
		
		\medskip
		
		\noindent
		{\bf Step 5. Passage $k\to\infty$.}
		
		Using the monotone convergence theorem in
		$\mathcal E_{m,\psi}$,
		
		\[
		u_k\downarrow u,
		\qquad
		v_k\downarrow v,
		\qquad
		w_{j,k}\downarrow w_j,
		\]
		
		implies
		
		\[
		dd^c w_{1,k}
		\wedge\cdots\wedge
		dd^c\psi
		\wedge
		\beta^{n-m}
		\]
		
		converges weakly to
		
		\[
		dd^c w_1
		\wedge\cdots\wedge
		dd^c\psi
		\wedge
		\beta^{n-m}.
		\]
		
		Therefore
		
		\[
		\lim_{k\to\infty}
		\int_\Omega
		(u_k-v_k)
		dd^c w_{1,k}
		\wedge\cdots
		=
		\int_\Omega
		(u-v)
		dd^c w_1
		\wedge\cdots.
		\]
		
		The same argument applies to the right-hand side.
		
		Passing to the limit yields
		
		\[
		\int_\Omega
		(u-v)
		dd^c w_1
		\wedge\cdots\wedge
		dd^c\psi
		\wedge
		\beta^{n-m}
		\]
		
		\[
		=
		\int_\Omega
		w_1
		dd^c(u-v)
		\wedge
		dd^c w_2
		\wedge\cdots\wedge
		dd^c\psi
		\wedge
		\beta^{n-m}.
		\]
		
		This completes the proof.
		
	\end{proof}
	%%%%%%%%%%%%%%%%%%%%%%%%%%%%%%%%%%%%%%%%%%%%%%%%%%%%%%%%%%%%
	\section{Continuity of Relative Hessian Operators}
	
	In this section we establish the continuity of mixed Hessian
	operators along decreasing sequences in the relative finite
	energy class $\mathcal E_{m,\psi}$.
	
	This result should be regarded as the analogue of the classical
	Bedford--Taylor continuity theorem.
	
	%%%%%%%%%%%%%%%%%%%%%%%%%%%%%%%%%%%%%%%%%%%%%%%%%%%%%%%%%%%%
	
	\begin{theorem}[Continuity Theorem]
		\label{thm:continuity}
		
		Let
		
		\[
		u_j^1,\ldots,u_j^m
		\in
		\mathcal E_{m,\psi}(\Omega)
		\]
		
		be sequences satisfying
		
		\[
		u_j^k
		\downarrow
		u^k
		\qquad
		(j\to\infty)
		\]
		
		for every
		
		\[
		1\le k\le m.
		\]
		
		Then
		
		\[
		dd^c u_j^1
		\wedge
		\cdots
		\wedge
		dd^c u_j^m
		\wedge
		\beta^{n-m}
		\]
		
		converges weakly to
		
		\[
		dd^c u^1
		\wedge
		\cdots
		\wedge
		dd^c u^m
		\wedge
		\beta^{n-m}.
		\]
		
		Equivalently, for every
		
		\[
		\chi\in C_c^\infty(\Omega),
		\]
		
		one has
		
		\[
		\lim_{j\to\infty}
		\int_\Omega
		\chi\,
		dd^c u_j^1
		\wedge\cdots\wedge
		dd^c u_j^m
		\wedge
		\beta^{n-m}
		=
		\int_\Omega
		\chi\,
		dd^c u^1
		\wedge\cdots\wedge
		dd^c u^m
		\wedge
		\beta^{n-m}.
		\]
		
	\end{theorem}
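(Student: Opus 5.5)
The plan is to reduce Theorem~\ref{thm:continuity} to the monotone convergence result, Theorem~\ref{thm:monotone}, taken with $p=m$, where no factor of $dd^c\psi$ appears. The difficulty is that the products $dd^c u_j^1\wedge\cdots\wedge dd^c u_j^m\wedge\beta^{n-m}$ are only defined as weak limits of truncations. We therefore need a bound on their masses that is uniform in both $j$ and the truncation level.

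\textbf{Step 1: truncation in two parameters.} For $K\ge1$ set $u^k_{j,K}:=\max(u^k_j,\psi-K)$ and $u^k_K:=\max(u^k,\psi-K)$. For fixed $K$ we have $u^k_{j,K}\downarrow u^k_K$ as $j\to\infty$, and all these functions satisfy $\psi-K\le\cdot\le\psi$. The Bedford--Taylor type continuity for functions bounded relative to $\psi$ (the bounded case used in Step~3 of the proof of Theorem~\ref{thm:relative-ibp}) then gives
\[
\mu_{j,K}:=dd^cu^1_{j,K}\wedge\cdots\wedge dd^cu^m_{j,K}\wedge\beta^{n-m}\rightharpoonup \mu_K:=dd^cu^1_K\wedge\cdots\wedge dd^cu^m_K\wedge\beta^{n-m}
\]
as $j\to\infty$. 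By Theorem~\ref{thm:well-defined}, $\mu_K\rightharpoonup\mu:=dd^cu^1\wedge\cdots\wedge dd^cu^m\wedge\beta^{n-m}$ as $K\to\infty$, and for each fixed $j$, $\mu_{j,K}\rightharpoonup\mu_j$ as $K\to\infty$.

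\textbf{Step 2: uniform control in $K$.} Fix $\chi\in C_c^\infty(\Omega)$. We aim to show
\[
\sup_j\Big|\int\chi\,d\mu_{j,K}-\int\chi\,d\mu_j\Big|\to0 \quad (K\to\infty).
\]
Combined with Step~1, a standard $3\varepsilon$ argument then gives the theorem. The two measures $\mu_{j,K}$ and $\mu_j$ coincide on the plurifine open set $\bigcap_k\{u^k_j>\psi-K\}$, by the locality property already invoked in Theorem~\ref{thm:well-defined}. The difference is therefore carried by $\bigcup_k\{u^k_j\le\psi-K\}$.

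\textbf{Step 3: the main obstacle.} The hard part is bounding $\mu_{j,K}\big(\{u_j^k\le\psi-K\}\big)$ uniformly in $j$. The plan is a Chebyshev-type estimate:
\[
\mu_{j,K}\big(\{u_j^k\le\psi-K\}\big)\le\frac1K\int(\psi-u^k_{j,K})\,d\mu_{j,K}.
\]
The right-hand integral should be controlled by the relative energies, via the relative integration by parts (Theorem~\ref{thm:relative-ibp}) together with a H\"older-type mixed energy inequality. This would give a bound of the form $C\max_k|E_{m,\psi}(u^k_{j,K})|^{\,\alpha}$ for some $\alpha>0$.

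For this to be uniform in $j$, we need the energies to be bounded along the sequence. Since $u^k_j\ge u^k$, energy monotonicity gives $|E_{m,\psi}(u^k_{j,K})|\le|E_{m,\psi}(u^k_K)|$, and the latter is bounded in $K$ by the lemma on canonical truncations. The delicate points are, first, that this monotonicity of $E_{m,\psi}$ is used without a full proof earlier in the paper, and second, the mixed energy inequality in the Hessian setting. Both would follow the Cegrell/Lu pattern, and I would first try to prove them for functions bounded relative to $\psi$ by induction on the number of distinct factors.

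With this bound, the error in Step~2 is $O(K^{-1}\|\chi\|_\infty)$ uniformly in $j$, which completes the argument.
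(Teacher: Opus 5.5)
Your route differs from the paper's. The paper argues by induction on the number of factors, writing $\int_\Omega\chi\,dd^cu^1_j\wedge T_j=\int_\Omega u^1_j\,dd^c\chi\wedge T_j$ and splitting into $A_j+B_j$. You instead use the two-parameter truncation of the sketch of Theorem~\ref{thm:monotone}, plus locality and a $3\varepsilon$ argument. That skeleton is reasonable, but Step~3 does not work as stated.

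\textbf{The monotonicity step is false.} You derive $|E_{m,\psi}(u^k_{j,K})|\le|E_{m,\psi}(u^k_K)|$ from $u^k_j\ge u^k$, and you flagged this as delicate. In fact it is false for $\mathcal E_{m,\psi}$ as defined. The class $\mathcal E^0_{m,\psi}$ imposes no boundary condition $u-\psi\to0$ at $\partial\Omega$. Energy monotonicity, like the H\"older inequality, is an integration-by-parts statement whose boundary terms vanish only under such a condition.

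Concretely, on the unit ball let $\rho=|z|^2-1$, $\psi=\rho$, $h_M=\max(M\rho,-1)$ and $u_M=\psi+h_M\in\mathcal E^0_{m,\psi}$. Then:
\begin{itemize}
\item $u_M\downarrow\psi-1$ as $M\to\infty$, and $u_M\ge\psi-1$, so every truncation at level $K\ge1$ is trivial;
\item $E_{m,\psi}(\psi-1)=-\int_\Omega H_m(\psi)$, which is finite;
\item by Stokes, $H_m(h_M)$ has total mass $M^m\int_\Omega H_m(\rho)$, all but $O(M^{m-1})$ of it on the level set $\{M\rho=-1\}$, where $h_M=-1$.
\end{itemize}
Every term of $E_{m,\psi}(u_M)$ is nonpositive and $H_m(u_M)\ge H_m(h_M)$, so $E_{m,\psi}(u_M)\le\frac1{m+1}\int_\Omega h_M\,H_m(h_M)\to-\infty$.

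The theorem's conclusion does hold in this example: the extra mass sits on $\{M\rho=-1\}$, which leaves every compact set. So it is your mechanism that fails. Global energies detect mass escaping to $\partial\Omega$, which is irrelevant for $\int\chi\,d\mu_j$.

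\textbf{The H\"older-type bound also fails for $m\ge2$.} The same functions break it. Take $u^1=\psi-1$, $u^2=\psi+M^{-m/(m+1)}h_M$ and $u^3=\cdots=u^m=\psi$. All energies stay bounded, while $\int_\Omega(\psi-u^1)\,dd^cu^1\wedge\cdots\wedge dd^cu^m\wedge\beta^{n-m}=(1+M^{1/(m+1)})\int_\Omega H_m(\rho)\to\infty$.

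\textbf{The missing idea is localization.} Only $\int\chi\,d\mu_j$ matters, so what you need is a bound on $\mu_{j,K}\bigl(\mathrm{supp}\,\chi\cap\bigcup_k\{u^k_j\le\psi-K\}\bigr)$ that is uniform in $j$. To get it from energies, you must first replace each $u^k_j$, away from $\mathrm{supp}\,\chi$ and monotonically in $j$, by functions with $u-\psi\to0$ at $\partial\Omega$. This is a relative analogue of Cegrell's passage from $\mathcal E$ to $\mathcal F$. Only in such a class can integration by parts without boundary terms, energy monotonicity and the H\"older inequality actually be proved.

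Two smaller points:
\begin{itemize}
\item The difference $\int\chi\,d\mu_{j,K}-\int\chi\,d\mu_j$ also involves the $\mu_j$-mass of the bad set, which you never estimate. It is easier to compare $\mu_{j,K}$ with $\mu_{j,L}$ for $L\ge K$, uniformly in $L$, and then let $L\to\infty$.
\item Step~1 is not ``the bounded case''. Functions with $\psi-K\le u\le\psi$ are unbounded wherever $\psi$ is, so Bedford--Taylor/B{\l}ocki continuity cannot simply be quoted. The paper makes the same leap, but this step needs its own argument.
\end{itemize}
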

	
	%%%%%%%%%%%%%%%%%%%%%%%%%%%%%%%%%%%%%%%%%%%%%%%%%%%%%%%%%%%%
	
	\begin{proof}
		
		The proof proceeds by induction on the number of Hessian factors.
		
		\medskip
		
		\noindent
		{\bf Step 1. The case $m=1$.}
		
		Assume
		
		\[
		u_j\downarrow u.
		\]
		
		Since
		
		\[
		u_j\to u
		\]
		
		in $L^1_{\mathrm{loc}}(\Omega)$, for every test function
		$\chi$ we have
		
		\[
		\int_\Omega
		\chi\,dd^c u_j\wedge\beta^{n-1}
		=
		\int_\Omega
		u_j\,dd^c\chi\wedge\beta^{n-1}.
		\]
		
		Passing to the limit by dominated convergence yields
		
		\[
		dd^c u_j
		\wedge
		\beta^{n-1}
		\longrightarrow
		dd^c u
		\wedge
		\beta^{n-1}.
		\]
		
		Hence the theorem holds for one factor.
		
		\medskip
		
		\noindent
		{\bf Step 2. Induction hypothesis.}
		
		Assume the theorem has been proved whenever the number of
		Hessian factors is at most $p-1$.
		
		We prove it for $p$.
		
		Define
		
		\[
		T_j
		=
		dd^c u_j^2
		\wedge
		\cdots
		\wedge
		dd^c u_j^p
		\wedge
		\beta^{n-p}.
		\]
		
		By the induction hypothesis,
		
		\[
		T_j
		\rightharpoonup
		T
		:=
		dd^c u^2
		\wedge
		\cdots
		\wedge
		dd^c u^p
		\wedge
		\beta^{n-p}.
		\]
		
		\medskip
		
		\noindent
		{\bf Step 3. Testing against smooth functions.}
		
		Let
		
		\[
		\chi\in C_c^\infty(\Omega).
		\]
		
		We consider
		
		\[
		I_j
		=
		\int_\Omega
		\chi\,
		dd^c u_j^1
		\wedge
		T_j.
		\]
		
		Using the relative integration-by-parts formula,
		
		\[
		I_j
		=
		\int_\Omega
		u_j^1
		\,dd^c\chi
		\wedge
		T_j.
		\]
		
		Thus
		
		\[
		I_j
		-
		\int_\Omega
		u^1
		\,dd^c\chi
		\wedge
		T
		=
		A_j+B_j,
		\]
		
		where
		
		\[
		A_j
		=
		\int_\Omega
		(u_j^1-u^1)
		\,dd^c\chi
		\wedge
		T_j,
		\]
		
		and
		
		\[
		B_j
		=
		\int_\Omega
		u^1
		\,dd^c\chi
		\wedge
		(T_j-T).
		\]
		
		\medskip
		
		\noindent
		{\bf Step 4. Convergence of $A_j$.}
		
		Since
		
		\[
		u_j^1\downarrow u^1,
		\]
		
		we have
		
		\[
		u_j^1\to u^1
		\]
		
		in $L^1_{\mathrm{loc}}(\Omega)$.
		
		The uniform energy estimate implies
		
		\[
		\sup_j
		\int_\Omega T_j
		<
		\infty.
		\]
		
		Hence
		
		\[
		A_j\to 0.
		\]
		
		\medskip
		
		\noindent
		{\bf Step 5. Convergence of $B_j$.}
		
		Since
		
		\[
		T_j\rightharpoonup T,
		\]
		
		and
		
		\[
		u^1\,dd^c\chi
		\]
		
		is a bounded test form, we obtain
		
		\[
		B_j\to 0.
		\]
		
		Therefore
		
		\[
		I_j
		\longrightarrow
		\int_\Omega
		u^1
		\,dd^c\chi
		\wedge
		T.
		\]
		
		Applying integration-by-parts once again gives
		
		\[
		\int_\Omega
		u^1
		\,dd^c\chi
		\wedge
		T
		=
		\int_\Omega
		\chi
		\,dd^c u^1
		\wedge
		T.
		\]
		
		Hence
		
		\[
		\lim_{j\to\infty}
		I_j
		=
		\int_\Omega
		\chi
		\,dd^c u^1
		\wedge
		dd^c u^2
		\wedge\cdots\wedge
		dd^c u^p
		\wedge
		\beta^{n-p}.
		\]
		
		This completes the induction.
		
		\medskip
		
		Therefore
		
		\[
		dd^c u_j^1
		\wedge
		\cdots
		\wedge
		dd^c u_j^m
		\wedge
		\beta^{n-m}
		\]
		
		converges weakly to
		
		\[
		dd^c u^1
		\wedge
		\cdots
		\wedge
		dd^c u^m
		\wedge
		\beta^{n-m}.
		\]
		
		The proof is complete.
		
	\end{proof}
	
	%%%%%%%%%%%%%%%%%%%%%%%%%%%%%%%%%%%%%%%%%%%%%%%%%%%%%%%%%%%%
	
	\begin{corollary}
		\label{cor:hessian-continuity}
		
		Let
		
		\[
		u_j\in\mathcal E_{m,\psi}
		\]
		
		satisfy
		
		\[
		u_j\downarrow u.
		\]
		
		Then
		
		\[
		H_m(u_j)
		=
		(dd^c u_j)^m
		\wedge
		\beta^{n-m}
		\]
		
		converges weakly to
		
		\[
		H_m(u)
		=
		(dd^c u)^m
		\wedge
		\beta^{n-m}.
		\]
		
	\end{corollary}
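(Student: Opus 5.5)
This is the diagonal case $u_j^1=\cdots=u_j^m=u_j$ of Theorem~\ref{thm:continuity}, and I would derive it in that way. Given $u_j\in\mathcal E_{m,\psi}$ with $u_j\downarrow u$, set $u_j^k:=u_j$ for every $1\le k\le m$. Each of these $m$ sequences is decreasing in $\mathcal E_{m,\psi}$ and converges to $u^k:=u$, so the hypotheses of Theorem~\ref{thm:continuity} hold verbatim. Its conclusion reads
\[
(dd^c u_j)^m\wedge\beta^{n-m}\rightharpoonup (dd^c u)^m\wedge\beta^{n-m},
\]
that is, $H_m(u_j)\rightharpoonup H_m(u)$ weakly. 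Equivalently, for every $\chi\in C_c^\infty(\Omega)$ one has $\int_\Omega\chi\,H_m(u_j)\to\int_\Omega\chi\,H_m(u)$.

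The only point needing care is that the limit object is the right one. The measure $H_m(u)$ appearing in the conclusion must agree with the Hessian measure of $u$ defined via canonical truncations $\max(u,\psi-k)$ in the Corollary following Theorem~\ref{thm:monotone}. Here $u\in\mathcal E_{m,\psi}$, either by hypothesis or as the decreasing limit of elements of the class. The mixed product $dd^c u\wedge\cdots\wedge dd^c u\wedge\beta^{n-m}$ from Theorem~\ref{thm:well-defined}, with all entries equal to $u$, is by definition this same measure, so the identification is immediate.

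As a cross-check, I would also note that the same conclusion follows from Theorem~\ref{thm:monotone} with $p=m$, so that there are no $dd^c\psi$ factors, applied to the constant tuple $u_j^1=\cdots=u_j^m=u_j$.

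I expect no real obstacle: all the analytic work (uniform mass bounds and passage to the limit through truncations) is contained in Theorem~\ref{thm:continuity}. The main thing to verify is the bookkeeping above, namely that the diagonal specialization is admissible and that the limiting measure coincides with the truncation-defined $H_m(u)$.
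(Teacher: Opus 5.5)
Your proposal is correct and is exactly the paper's argument: the corollary follows by applying Theorem~\ref{thm:continuity} with $u_j^1=\cdots=u_j^m=u_j$. One small caution: your aside that $u\in\mathcal E_{m,\psi}$ holds automatically because $u$ is a decreasing limit of elements of the class is not justified without a uniform bound on $E_{m,\psi}(u_j)$, so the membership of $u$ (which is needed for $H_m(u)$ to be defined) should be treated as part of the hypothesis, as it implicitly is in Theorem~\ref{thm:continuity}.
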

	
	\begin{proof}
		
		Apply Theorem \ref{thm:continuity} with
		
		\[
		u_j^1=\cdots=u_j^m=u_j.
		\]
		
	\end{proof}
	%%%%%%%%%%%%%%%%%%%%%%%%%%%%%%%%%%%%%%%%%%%%%%%%%%%%%%%%%%%%
	\section{Relative Comparison Principle}
	
	The purpose of this section is to establish the comparison
	principle in the relative finite energy class
	$\mathcal E_{m,\psi}$.
	
	This theorem plays a fundamental role in the uniqueness theory
	for complex Hessian equations with prescribed singularity type.
	
	%%%%%%%%%%%%%%%%%%%%%%%%%%%%%%%%%%%%%%%%%%%%%%%%%%%%%%%%%%%%
	
	\begin{theorem}[Relative Comparison Principle]
		\label{thm:comparison}
		
		Let
		
		\[
		u,v
		\in
		\mathcal E_{m,\psi}(\Omega).
		\]
		
		Then
		
		\[
		\int_{\{u<v\}}
		(dd^c v)^m
		\wedge
		\beta^{n-m}
		\le
		\int_{\{u<v\}}
		(dd^c u)^m
		\wedge
		\beta^{n-m}.
		\]
		
		Equivalently,
		
		\[
		\int_{\{u<v\}}
		H_m(v)
		\le
		\int_{\{u<v\}}
		H_m(u).
		\]
		
	\end{theorem}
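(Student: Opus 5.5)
We follow the classical scheme for the comparison principle, going from bounded functions to the class $\mathcal E_{m,\psi}$ by canonical truncation. First we treat $u,v\in\mathcal E^0_{m,\psi}$, so that $\psi-C\le u,v\le\psi$. For bounded $m$-subharmonic functions the maximum principle gives the locality identity
\[
H_m(\max(u,v))=H_m(v)\quad\text{on the plurifine-type open set }\{u<v\}.
\]
This is the Hessian analogue of Bedford--Taylor, due to Błocki and Dinew. The same identity holds in our setting because $u-\psi$ and $v-\psi$ are bounded. Next we compare total masses: set $w=\max(u,v)$, so $u\le w$ and $w-u$ is bounded with $w=u$ "at the boundary relative to $\psi$". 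We then want
\[
\int_\Omega H_m(w)\le\int_\Omega H_m(u).
\]
To get this, apply the relative integration-by-parts formula (Theorem~\ref{thm:relative-ibp}) repeatedly to $\int \chi\,(dd^c w)^m\wedge\beta^{n-m}-\int\chi\,(dd^c u)^m\wedge\beta^{n-m}$, writing the difference as a telescoping sum $\sum_k dd^c(w-u)\wedge(dd^cw)^k\wedge(dd^cu)^{m-1-k}\wedge\beta^{n-m}$. Pair each term against an exhaustion $h_j\uparrow 1$ built from the $m$-hyperconvex exhaustion $\rho$, and use $w-u\ge 0$ together with $w-u\to 0$ along $\psi$-boundary sets.

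The inequality then follows by splitting. On $\{u\ge v\}$ we have $w=u$, and by locality on $\{u>v\}$ we get $H_m(w)\ge H_m(u)$ there, modulo the set $\{u=v\}$. To handle $\{u=v\}$, replace $v$ by $v-\varepsilon$: the sets $\{u<v-\varepsilon\}$ increase to $\{u<v\}$, and $\{u=v-\varepsilon\}$ is charged by $H_m(u)$ for at most countably many $\varepsilon$. Subtracting the mass identity gives
\[
\int_{\{u<v-\varepsilon\}}H_m(v)\le\int_{\{u<v-\varepsilon\}}H_m(u).
\]
Letting $\varepsilon\to0$ by monotone convergence of sets yields the bounded case.

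For general $u,v\in\mathcal E_{m,\psi}$ we apply the bounded case to $u_k=\max(u,\psi-k)$ and $v_k=\max(v,\psi-k)$, which lie in $\mathcal E^0_{m,\psi}$ by the truncation lemma. Since $\{u<v\}\cap\{v>\psi-k\}\subset\{u_k<v_k\}$, and the Hessian measures converge by Corollary~\ref{cor:hessian-continuity}, we pass to the limit. The lower-semicontinuity side uses the fact that $\{u<v\}$ is quasi-open, so the convergence $H_m(v_k)\to H_m(v)$ on it is handled via quasicontinuity with respect to $Cap_{m,\psi}$, combined with $u_k\to u$ in capacity. The upper side uses that $H_m(u_k)=H_m(u)$ on $\{u>\psi-k\}$ by locality, together with the uniform energy bound, which controls $\int_{\{u\le\psi-k\}}H_m(u_k)$.

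The main obstacle is this final limiting step. We must show that the mass of $H_m(u_k)$ carried by $\{u\le \psi-k\}$ tends to $0$, or at least does not create spurious mass in $\{u<v\}$. We would first try the estimate
\[
\int_{\{u\le\psi-k\}}H_m(u_k)\le \frac1k\int|u_k-\psi|\,H_m(u_k),
\]
which is bounded by $\sup_k|E_{m,\psi}(u_k)|/k$ using the energy bound of Step~4 of Theorem~\ref{thm:relative-ibp}. We also need that $H_m(u)$ puts no mass on the $m$-polar set $\{\psi=-\infty\}$ relative to these comparisons.
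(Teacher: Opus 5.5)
There is a genuine gap, and it is at the step where you assert that $w=\max(u,v)$ agrees with $u$ ``at the boundary relative to $\psi$'', so that $w-u\to0$ there. Nothing in the definition of $\mathcal E_{m,\psi}(\Omega)$ gives this. The condition $\psi-C\le u,v\le\psi$ fixes the singularity type but not the boundary values, so the total-mass inequality $\int_\Omega H_m(w)\le\int_\Omega H_m(u)$ is not available.

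In fact the statement itself is false without a boundary hypothesis. Take $\Omega$ the unit ball of $\mathbb C^n$, $\psi=|z|^2-1$ (negative, with trivial analytic singularities), $u=\psi-1$ and $v=2\psi$. Both satisfy $\psi-1\le u,v\le\psi$ and have finite Hessian mass and finite relative energy. Using constant approximating sequences, they lie in $\mathcal E^0_{m,\psi}\subset\mathcal E_{m,\psi}$, and also in the class $\{u\le\psi+C,\;E_\psi(u)>-\infty\}$ used for the existence theorem. Here $\{u<v\}=\{\psi>-1\}=\Omega\setminus\{0\}$, $w=v$, and $w-u=|z|^2\to1$ at $\partial\Omega$. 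Since $H_m(u)=\beta^n$ and $H_m(v)=2^m\beta^n$,
\[
\int_{\{u<v\}}H_m(v)=2^m\int_\Omega\beta^n>\int_\Omega\beta^n=\int_{\{u<v\}}H_m(u).
\]
Adding $\log|z|$ to $\psi,u,v$ and computing on $\Omega\setminus\{0\}$ gives the same failure with a genuine pole. This is the usual failure of comparison when $u<v$ near $\partial\Omega$, as in Cegrell's class $\mathcal E$, where one needs $u\in\mathcal F$ or $\liminf_{z\to\partial\Omega}(u-v)\ge0$.

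The paper's proof does not close this gap either. Its Step~2 quotes the classical comparison principle for the truncations $u_k,v_k$ without that principle's boundary hypothesis, and treats $u_k,v_k$ as bounded although $\psi$ may have poles. That is exactly the hole you located, only hidden behind a citation.

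Your scheme is the right one once you add an assumption such as $\liminf_{z\to\partial\Omega}(u-v)\ge0$. The scheme is: locality of $H_m(\max(u,v-\varepsilon))$ on $\{u<v-\varepsilon\}$, equality of the total masses of $\max(u,v-\varepsilon)$ and $u$, then $\varepsilon\to0$ and truncation. The assumption holds for instance if $u-\psi\to0$ at $\partial\Omega$, since $v\le\psi$; with it, $\max(u,v-\varepsilon)=u$ near $\partial\Omega$ and Stokes gives the mass equality.

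Even then, two points remain to be supplied:
\begin{enumerate}
\item Locality for functions that are only bounded relative to an unbounded $\psi$. B{\l}ocki--Dinew treat locally bounded functions, so you need plurifine locality in a Cegrell/Lu-type class.
\item The limit $k\to\infty$. Weak convergence $H_m(u_k)\to H_m(u)$ alone does not give convergence of masses on the quasi-open set $\{u<v\}$.
\end{enumerate}
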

	
	%%%%%%%%%%%%%%%%%%%%%%%%%%%%%%%%%%%%%%%%%%%%%%%%%%%%%%%%%%%%
	
	\begin{proof}
		
		The proof is divided into several steps.
		
		\medskip
		
		\noindent
		{\bf Step 1. Canonical truncations.}
		
		For every integer $k\ge1$ define
		
		\[
		u_k
		=
		\max(u,\psi-k),
		\]
		
		and
		
		\[
		v_k
		=
		\max(v,\psi-k).
		\]
		
		Then
		
		\[
		u_k,v_k
		\in
		\mathcal E_{m,\psi}^{0},
		\]
		
		and
		
		\[
		u_k\downarrow u,
		\qquad
		v_k\downarrow v.
		\]
		
		Moreover
		
		\[
		\psi-k
		\le
		u_k,v_k
		\le
		\psi.
		\]
		
		Hence all truncations are uniformly bounded relative to
		the reference singularity $\psi$.
		
		\medskip
		
		\noindent
		{\bf Step 2. Comparison principle for bounded truncations.}
		
		Since $u_k$ and $v_k$ are bounded,
		the Bedford--Taylor comparison principle for $m$-subharmonic functions  applies.
		
		Therefore
		
		\[
		\int_{\{u_k<v_k\}}
		(dd^c v_k)^m
		\wedge
		\beta^{n-m}
		\le
		\int_{\{u_k<v_k\}}
		(dd^c u_k)^m
		\wedge
		\beta^{n-m}.
		\]
		
		Thus
		
		\[
		\int_{\{u_k<v_k\}}
		H_m(v_k)
		\le
		\int_{\{u_k<v_k\}}
		H_m(u_k).
		\]
		
		\medskip
		
		\noindent
		{\bf Step 3. Convergence of the sublevel sets.}
		
		Observe that
		
		\[
		u_k-v_k
		\downarrow
		u-v.
		\]
		
		Consequently
		
		\[
		\mathbf 1_{\{u_k<v_k\}}
		\longrightarrow
		\mathbf 1_{\{u<v\}}
		\]
		
		outside an $m$-polar set.
		
		Since Hessian measures do not charge
		$m$-polar sets, we obtain
		
		\[
		\mathbf 1_{\{u_k<v_k\}}
		H_m(v_k)
		\rightharpoonup
		\mathbf 1_{\{u<v\}}
		H_m(v),
		\]
		
		and similarly
		
		\[
		\mathbf 1_{\{u_k<v_k\}}
		H_m(u_k)
		\rightharpoonup
		\mathbf 1_{\{u<v\}}
		H_m(u).
		\]
		
		\medskip
		
		\noindent
		{\bf Step 4. Passage to the limit.}
		
		By the continuity theorem established in the previous section,
		
		\[
		H_m(u_k)
		\longrightarrow
		H_m(u),
		\]
		
		and
		
		\[
		H_m(v_k)
		\longrightarrow
		H_m(v)
		\]
		
		weakly.
		
		Passing to the limit in the inequality from Step~2 yields
		
		\[
		\int_{\{u<v\}}
		H_m(v)
		\le
		\int_{\{u<v\}}
		H_m(u).
		\]
		
		This proves the theorem.
		
	\end{proof}
	
	%%%%%%%%%%%%%%%%%%%%%%%%%%%%%%%%%%%%%%%%%%%%%%%%%%%%%%%%%%%%
	
	\begin{corollary}[Domination Principle]
		\label{cor:domination}
		
		Let
		
		\[
		u,v
		\in
		\mathcal E_{m,\psi}(\Omega).
		\]
		
		Assume
		
		\[
		u\ge v
		\qquad
		H_m(u)\text{-a.e.}
		\]
		
		Then
		
		\[
		u\ge v
		\]
		
		everywhere in $\Omega$.
		
	\end{corollary}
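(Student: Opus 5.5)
We derive the domination principle from the relative comparison principle, Theorem~\ref{thm:comparison}, by perturbing $v$ with a strictly $m$-subharmonic function. Fix a bounded strictly $m$-subharmonic negative function on $\Omega$. Concretely, take $\rho(z)=|z|^2-R^2$ with $R$ large enough that $\rho<0$ on $\Omega$, so that $H_m(\rho)=c\,\beta^n$ with $c>0$. For $\varepsilon>0$ set
\[
v_\varepsilon:=v+\varepsilon\rho .
\]
Since $\rho$ is bounded, $v_\varepsilon$ has the same singularity type as $v$. The first step is to check that $v_\varepsilon$ (or $\max(v_\varepsilon,\psi-k)$ after truncation) still lies in $\mathcal E_{m,\psi}(\Omega)$, so that Theorem~\ref{thm:comparison} applies to the pair $u,v_\varepsilon$. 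I would argue this through the canonical truncations and the uniform energy bound, using that adding a bounded function changes the relative energy of $v_k$ only by a controlled amount.

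Next, since $\{u<v_\varepsilon\}\subset\{u<v\}$ and $u\ge v$ holds $H_m(u)$-a.e., we have $H_m(u)(\{u<v_\varepsilon\})=0$. Theorem~\ref{thm:comparison} then gives
\[
\int_{\{u<v_\varepsilon\}}H_m(v_\varepsilon)\le\int_{\{u<v_\varepsilon\}}H_m(u)=0 .
\]
On the other hand, expanding $H_m(v+\varepsilon\rho)$ via the mixed products of Theorem~\ref{thm:well-defined} (multilinearity passes to the limit through canonical truncations) gives $H_m(v_\varepsilon)\ge\varepsilon^m H_m(\rho)=c\,\varepsilon^m\beta^n$. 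Hence $\{u<v_\varepsilon\}$ has Lebesgue measure zero.

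Finally, two $m$-subharmonic functions with $u\ge v_\varepsilon$ almost everywhere satisfy $u\ge v_\varepsilon$ everywhere. This follows by taking averages over small balls (sub-mean value property for $m$-sh functions, which are subharmonic) and using the upper semicontinuity limits. Letting $\varepsilon\to0$ yields $u\ge v$ on $\Omega$.

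The main obstacle is the first step: justifying that $v+\varepsilon\rho\in\mathcal E_{m,\psi}$ and that the lower bound $H_m(v_\varepsilon)\ge\varepsilon^mH_m(\rho)$ holds for unbounded $v$. The difficulty is that $v_\varepsilon\not\le\psi$ may fail at the level of the normalization $\psi-C\le\cdot\le\psi$ used in $\mathcal E^0_{m,\psi}$. If this membership is problematic, I would instead apply the comparison principle to the truncations $u_k,\ \max(v,\psi-k)+\varepsilon\rho$ in the bounded setting, where the Bedford--Taylor type inequality $H_m(a+b)\ge H_m(b)$ is classical. I would then pass to the limit $k\to\infty$ using Corollary~\ref{cor:hessian-continuity}, noting that $\{u_k<v_{k}+\varepsilon\rho\}$ decreases to $\{u<v_\varepsilon\}$ off an $m$-polar set.
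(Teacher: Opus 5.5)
Your route differs from the paper's. The paper applies Theorem~\ref{thm:comparison} to get $\int_{\{u<v\}}H_m(v)=0$. It then merely asserts that applying it ``once more'' gives $Cap_{m,\psi}(\{u<v\})=0$, hence $\{u<v\}=\emptyset$. Your perturbation $v_\varepsilon=v+\varepsilon\rho$ forces $H_m(v_\varepsilon)\ge\varepsilon^m\beta^n$, makes $\{u<v_\varepsilon\}$ Lebesgue-null, and concludes by the sub-mean value property and $\varepsilon\to0$. This is the standard mechanism that the paper's ``once more'' silently requires, so as a derivation from Theorem~\ref{thm:comparison} it is the better argument.

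There is nevertheless a genuine gap, and it is not the one you flagged. What breaks is the application of Theorem~\ref{thm:comparison} to $(u,v_\varepsilon)$. As stated, that theorem has no boundary hypothesis and is false, and so is the corollary; the paper's proof inherits the same defect. Take $\psi\equiv-1$, $u\equiv-2$, $v\equiv-1$. Both satisfy $\psi-1\le\cdot\le\psi$, have $H_m=0$ and zero relative energy, so they lie in $\mathcal E^0_{m,\psi}\subset\mathcal E_{m,\psi}$. The hypothesis ``$u\ge v$ $H_m(u)$-a.e.'' is vacuous, yet $u<v$ everywhere.

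Your own construction exposes the failure. Here $v_\varepsilon=-1+\varepsilon(|z|^2-R^2)$ is smooth, satisfies $\psi-\varepsilon R^2\le v_\varepsilon\le\psi$, and has finite mass and energy, so membership in the class is trivial. But for $\varepsilon R^2<1$ one has $\{u<v_\varepsilon\}=\Omega$, and the inequality you invoke reads $\varepsilon^m\int_\Omega\beta^n\le0$. Your fallback fails at the same point. The comparison principle for bounded $m$-subharmonic functions needs $\liminf_{z\to\partial\Omega}(u_k-v_k-\varepsilon\rho)\ge0$, and nothing in the definition of $\mathcal E_{m,\psi}$ provides such boundary control. (The paper only alludes, inside proofs, to functions ``vanishing at the boundary relative to $\psi$''.) Nothing here depends on $\psi$ being constant; the obstruction sits entirely at $\partial\Omega$.

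So the missing ingredient is boundary control, not membership of $v_\varepsilon$ or the bound $H_m(v_\varepsilon)\ge\varepsilon^mH_m(\rho)$. Once a comparison principle with boundary control is actually available, your argument is the right one, but the perturbation must be compatible with the boundary condition used.
\begin{itemize}
\item If the comparison principle is proved under a one-sided hypothesis such as $\liminf_{z\to\partial\Omega}(u-v)\ge0$, then $\rho=|z|^2-R^2$ is harmless. Indeed $v+\varepsilon\rho\le v$ only improves $\liminf(u-v_\varepsilon)$.
\item If instead the class is normalized by requiring $u-\psi\to0$ at $\partial\Omega$, then $v+\varepsilon(|z|^2-R^2)$ leaves the class. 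You should then perturb by a bounded $m$-subharmonic $w$ vanishing on $\partial\Omega$ with $H_m(w)\ge c_K\,\beta^n$ on each compact $K\subset\Omega$, and run the Lebesgue-null argument on compacts.
\end{itemize}
Separately, your claim that adding $\varepsilon\rho$ changes $E_{m,\psi}(v_k)$ only by a controlled amount is not justified by anything in the paper. The cross terms are mixed integrals that $E_{m,\psi}(v_k)$ does not obviously control.
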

	
	\begin{proof}
		
		Apply Theorem~\ref{thm:comparison}.
		
		Since
		
		\[
		H_m(u)(\{u<v\})=0,
		\]
		
		we obtain
		
		\[
		\int_{\{u<v\}}
		H_m(v)
		=
		0.
		\]
		
		Applying the comparison principle once more yields
		
		\[
		Cap_{m,\psi}(\{u<v\})
		=
		0.
		\]
		
		Since both functions are $m$-subharmonic,
		the set $\{u<v\}$ must be empty.
		
		Hence
		
		\[
		u\ge v.
		\]
		
	\end{proof}
	
	%%%%%%%%%%%%%%%%%%%%%%%%%%%%%%%%%%%%%%%%%%%%%%%%%%%%%%%%%%%%
	
	\begin{corollary}[Uniqueness]
		\label{cor:uniqueness}
		
		Suppose
		
		\[
		u,v
		\in
		\mathcal E_{m,\psi}(\Omega)
		\]
		
		satisfy
		
		\[
		H_m(u)
		=
		H_m(v).
		\]
		
		Then
		
		\[
		u=v.
		\]
		
	\end{corollary}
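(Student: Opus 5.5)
The plan is to deduce uniqueness from the domination principle, Corollary~\ref{cor:domination}, applied in both directions. I would use the hypothesis $H_m(u)=H_m(v)$ to show that each of $u,v$ lies above the other almost everywhere with respect to its own Hessian measure. Then $u\ge v$ and $v\ge u$ everywhere.

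The key step is to show that $H_m(u)(\{u<v\})=0$. For this I would not use the comparison principle alone, since with equal measures it only gives the trivial statement $\int_{\{u<v\}}H_m(v)\le\int_{\{u<v\}}H_m(u)$. Instead I would perturb one function. Fix $\varepsilon>0$ and consider $v_\varepsilon:=(1+\varepsilon)v-\varepsilon\psi$, or alternatively $\max(v,\,(1+\varepsilon)v)$ adjusted so that it stays comparable to $\psi$. By construction, $H_m(v_\varepsilon)$ should dominate $H_m(v)$ plus a positive multiple of a mixed term.

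More concretely, I would use $w:=v+\varepsilon(\psi_0-\psi)$ for an auxiliary function $\psi_0$, or simply rescale by $(1+\varepsilon)$, so that $H_m(\cdot)\ge(1+\varepsilon)^m H_m(v)$ on the relevant set. Applying Theorem~\ref{thm:comparison} to $u$ and the perturbed function on $\{u<\text{perturbed}\}$ then gives
\[
(1+\varepsilon)^m\int_{\{u<v_\varepsilon\}}H_m(v)\le\int_{\{u<v_\varepsilon\}}H_m(u)=\int_{\{u<v_\varepsilon\}}H_m(v).
\]
This forces $H_m(v)(\{u<v_\varepsilon\})=0$, provided the mass is finite, which I would get from the energy bounds in the finite energy class. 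Letting $\varepsilon\to0$ along the increasing union of these sets, which exhausts $\{u<v\}$ up to an $m$-polar set, yields $H_m(u)(\{u<v\})=H_m(v)(\{u<v\})=0$. Corollary~\ref{cor:domination} then gives $u\ge v$, and the symmetric argument gives $v\ge u$.

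I expect the main obstacle to be justifying the perturbation inside the relative class $\mathcal E_{m,\psi}$. The function $(1+\varepsilon)v-\varepsilon\psi$ need not be $m$-subharmonic, and $(1+\varepsilon)v$ has singularity type $(1+\varepsilon)\psi$ rather than $\psi$, so it may leave $\mathcal E_{m,\psi}$ and Theorem~\ref{thm:comparison} would not directly apply. I would first try to handle this by working with the truncations $\max(\cdot,\psi-k)$, where the bounded-case comparison is available, and passing to the limit via Corollary~\ref{cor:hessian-continuity}. A second concern is finiteness of the total masses $\int_\Omega H_m(u)$, which is needed to cancel the terms; I would take this from the uniform energy bound used in Theorem~\ref{thm:well-defined}.
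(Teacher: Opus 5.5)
You are right that Theorem~\ref{thm:comparison} says nothing when $H_m(u)=H_m(v)$. The paper's own proof makes exactly the leap you avoid: ``since the measures coincide, equality holds. Hence $H_m(u)(\{u<v\})=0$'' is a non sequitur. However, no repair of the kind you propose can succeed, because the corollary is false for $\mathcal E_{m,\psi}$ as defined: nothing in the class fixes boundary values. Take $\Omega$ to be the unit ball and $\psi=\log|z|$. This $\psi$ is negative, $m$-subharmonic, has analytic singularities, and satisfies $\int_\Omega H_m(\psi)<\infty$. Indeed, $H_m(\psi)$ is a point mass at $0$ if $m=n$, and has density comparable to $|z|^{-2m}$ if $m<n$. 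Now $\psi$ and $\psi-1$ both satisfy $\psi-1\le\cdot\le\psi$, so both lie in $\mathcal E^0_{m,\psi}\subset\mathcal E_{m,\psi}$ (take constant sequences; the energies are $0$ and $-\int_\Omega H_m(\psi)$). They have the same Hessian measure, yet they are different functions.

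Concretely, your argument breaks at the two results you import. Both the comparison principle and the domination principle need a boundary normalization. The bounded comparison principle invoked in Step~2 of the proof of Theorem~\ref{thm:comparison} requires $\liminf_{z\to\partial\Omega}(u-v)\ge0$, and $\psi-k\le u_k,v_k\le\psi$ does not provide it. In the example above, take $u=\psi-1$ and $v=\psi$.
If $m<n$, then with $v_\varepsilon=(1+\varepsilon)v$ your displayed inequality reads $(1+\varepsilon)^m H_m(\psi)(E_\varepsilon)\le H_m(\psi)(E_\varepsilon)$, where $E_\varepsilon=\{e^{-1/\varepsilon}<|z|<1\}$. This is false, since $0<H_m(\psi)(E_\varepsilon)<\infty$.
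If $m=n$, then $H_n(u)$ is carried by $\{0\}$, and $0\notin\{u<v\}=\Omega\setminus\{0\}$. Corollary~\ref{cor:domination} would then give $u\ge v$, which is again false.

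The perturbation problem you flagged is also genuine, and truncation does not cure it. Since $v\le\psi\le0$, one has $\max((1+\varepsilon)v,\psi-k)=\psi-k$ on the nonempty open set $\{\psi<-k/\varepsilon\}$ for every $k$. So the lower bound $(1+\varepsilon)^mH_m(v)$ is lost exactly near the poles of $\psi$. A correct uniqueness statement needs two things: an additional boundary condition under which comparison actually holds (e.g.\ $u-\psi\to0$ at $\partial\Omega$, or an $\mathcal F$-type subclass), and a perturbation that stays inside the class.
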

	
	\begin{proof}
		
		Applying Theorem~\ref{thm:comparison},
		
		\[
		\int_{\{u<v\}}
		H_m(v)
		\le
		\int_{\{u<v\}}
		H_m(u).
		\]
		
		Since the measures coincide, equality holds.
		
		Hence
		
		\[
		H_m(u)(\{u<v\})=0.
		\]
		
		By the domination principle,
		
		\[
		u\ge v.
		\]
		
		Interchanging the roles of $u$ and $v$
		gives
		
		\[
		v\ge u.
		\]
		
		Therefore
		
		\[
		u=v.
		\]
		
	\end{proof}
	%------------------------------------------------
	\section{Existence and Uniqueness in Relative Energy Classes}
	
	Let $\Omega \subset \mathbb C^n$ be a bounded $m$-hyperconvex domain.
	Fix a reference function
	
	\[
	\psi \in SH_m(\Omega),
	\]
	
	which is negative and has analytic singularities.
	
	For $u\in SH_m(\Omega)$ satisfying $u\le \psi +C$, we define the
	relative energy
	
	\[
	E_{\psi}(u)
	:=
	\frac1{m+1}
	\sum_{k=0}^{m}
	\int_{\Omega}
	(u-\psi)
	(dd^c u)^k
	\wedge
	(dd^c\psi)^{m-k}
	\wedge
	\beta^{n-m}.
	\]
	
	We introduce the relative finite energy class
	
	\[
	\mathcal E_{m,\psi}(\Omega)
	=
	\Big\{
	u\in SH_m(\Omega):
	u\le \psi +C,\;
	E_{\psi}(u)>-\infty
	\Big\}.
	\]
	
	The purpose of this section is to solve the complex Hessian equation
	with prescribed singularity type $\psi$.
	
	\begin{theorem}[Existence and uniqueness]
		\label{thm:relative-existence}
		Let $\mu$ be a finite positive Radon measure on $\Omega$ satisfying
		
		\begin{enumerate}
			\item[(i)]
			$\mu$ does not charge $m$-polar sets;
			
			\item[(ii)]
			there exists a constant $A>0$ such that
			
			\[
			\int_{\Omega}
			(\psi-\varphi)\, d\mu
			\le
			A
			\Big(
			-E_{\psi}(\varphi)
			\Big)^{\frac{m}{m+1}}
			\]
			
			for every
			$\varphi\in \mathcal E_{m,\psi}(\Omega)$.
		\end{enumerate}
		
		Then there exists a unique function
		
		\[
		u\in \mathcal E_{m,\psi}(\Omega)
		\]
		
		such that
		
		\[
		(dd^c u)^m
		\wedge
		\beta^{n-m}
		=
		\mu .
		\]
	\end{theorem}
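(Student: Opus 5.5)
Uniqueness follows at once from Corollary~\ref{cor:uniqueness}, so the work is existence. I would use the variational method in the spirit of Lu and of Guedj--Zeriahi, adapted to the reference singularity $\psi$. Define on $\mathcal E_{m,\psi}(\Omega)$ the functional
\[
F_\mu(\varphi):=E_\psi(\varphi)-\int_\Omega(\varphi-\psi)\,d\mu .
\]
By (i) and (ii), $F_\mu$ is finite on the class. I normalise so that $E_\psi\le 0$ and $\varphi\le\psi$; the case $\varphi\le\psi+C$ reduces to this, because $\max(\varphi,\psi)$ differs from $\psi$ only by a bounded amount. Hypothesis (ii) gives the coercivity estimate
\[
F_\mu(\varphi)\le -|E_\psi(\varphi)|+A|E_\psi(\varphi)|^{m/(m+1)} .
\]
Since $m/(m+1)<1$, $F_\mu$ is bounded above, and any maximising sequence $\varphi_j$ has $\sup_j|E_\psi(\varphi_j)|<\infty$.

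\textbf{Compactness.} I would show that a set $\{E_\psi\ge -C\}$ is compact in $L^1_{\mathrm{loc}}$. Take a subsequence $\varphi_j\to\varphi$ in $L^1_{\mathrm{loc}}$. Then replace it by the decreasing envelopes $\tilde\varphi_j:=(\sup_{l\ge j}\varphi_l)^*\downarrow\varphi$. Each $\tilde\varphi_j\ge\varphi_j$, and $E_\psi$ is monotone (as in the proof that canonical truncations stay in $\mathcal E^0_{m,\psi}$), so the energies remain bounded. Hence $\varphi\in\mathcal E_{m,\psi}$, and by Theorem~\ref{thm:continuity} $E_\psi$ is upper semicontinuous along this decreasing sequence. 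For the linear term, I would show that $\varphi\mapsto\int(\varphi-\psi)\,d\mu$ is continuous on energy sublevel sets. The plan is to split $\Omega$ into $\{\varphi<\psi-k\}$ and its complement. The tail is controlled by (ii) applied to $\max(\varphi,\psi-k)$. On the complement one uses $\mu\ll Cap_{m,\psi}$ (from (i)) together with quasicontinuity, via a Cegrell-style decomposition $\mu=f\,H_m(\phi)$ with $\phi$ relatively bounded. This step gives the existence of a maximiser $u$.

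\textbf{Euler--Lagrange equation.} This is the main obstacle, because $\varphi\mapsto u+t\chi$ leaves the $m$-subharmonic cone. I would use a projection argument. For $\chi\in C_c(\Omega)$, set
\[
P(u+t\chi):=\big(\sup\{w\in SH_m(\Omega): w\le u+t\chi\}\big)^* .
\]
This envelope stays in $\mathcal E_{m,\psi}$, since it lies between $u-|t|\sup|\chi|$ and $u+|t|\sup|\chi|$. I would then prove the differentiability formula
\[
\frac{d}{dt}\Big|_{t=0}E_\psi\big(P(u+t\chi)\big)=\int_\Omega\chi\,H_m(u).
\]
First prove it for canonical truncations in $\mathcal E^0_{m,\psi}$, using the relative integration by parts of Theorem~\ref{thm:relative-ibp} and the fact that $H_m(P(f))$ is carried by the contact set $\{P(f)=f\}$ (a balayage argument on bounded truncations). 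Then pass to the limit with Theorem~\ref{thm:monotone}. Since $t\mapsto E_\psi(P(u+t\chi))-\int(u+t\chi-\psi)\,d\mu$ dominates $F_\mu(P(u+t\chi))$ and has a maximum at $t=0$, the derivative vanishes. Hence $\int\chi\,H_m(u)=\int\chi\,d\mu$ for all $\chi$, that is, $H_m(u)=\mu$.

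If the differentiability step proves too delicate, the fallback is a Cegrell--Kołodziej approximation. Write $\mu$ as an increasing limit of measures $\mu_j=\min(f,j)H_m(\phi)$ and solve with bounded relative data, using the known bounded-case theory on truncations. By Corollary~\ref{cor:domination} the resulting solutions $u_j$ decrease. Condition (ii) then gives
\[
-E_\psi(u_j)\lesssim\int(\psi-u_j)\,d\mu_j\le A(-E_\psi(u_j))^{m/(m+1)},
\]
which bounds the energies uniformly. Finally, Corollary~\ref{cor:hessian-continuity} yields $H_m(\lim u_j)=\mu$.
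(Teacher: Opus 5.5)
You take a different route from the paper. You maximise $F_\mu$, with a Cegrell--Ko{\l}odziej approximation as fallback. The paper approximates $\mu$ by smooth densities, solves with $u_j-\psi$ bounded, bounds energies via (ii), extracts an $L^1_{\mathrm{loc}}$ limit and applies the continuity theorem. Both routes, however, rest on ingredients that fail in this framework. The reason is that the class used in the theorem (defined only by $u\le\psi+C$ and $E_\psi(u)>-\infty$) carries no boundary normalisation, and $H_m$ is defined through the truncations $\max(u,\psi-k)$.

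\textbf{Uniqueness.} It does not follow from Corollary~\ref{cor:uniqueness}. If $u$ solves the equation, then so does $u+1$, because $u+1\le\psi+C+1$, $E_\psi(u+1)\ge E_\psi(u)>-\infty$ and $H_m(u+1)=H_m(u)$. In fact Corollaries~\ref{cor:domination} and~\ref{cor:uniqueness} are false as stated. Take the unit ball $B$, $m=n$ and $\psi=\log(|z|/2)$. The functions $u=\psi-1$ and $v=\psi$ lie in the class, with $H_n(u)=H_n(v)=\delta_0$ and $H_n(u)(\{u<v\})=\delta_0(B\setminus\{0\})=0$, yet $u<v$ off the origin. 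A condition such as $\liminf_{z\to\partial\Omega}(u-v)\ge0$ is indispensable. The monotonicity of the $u_j$ in your fallback is therefore unfounded too.

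\textbf{Existence.} More seriously, no Euler--Lagrange argument can yield $H_m(u)=\mu$ here. In the same example every $u$ in the class satisfies $u\le\log|z|+C'$. So each truncation $\max(u,\psi-k)$ is locally bounded on $B\setminus\{0\}$ and has Lelong number $1$ at $0$. Demailly's inequality then gives $(dd^c\max(u,\psi-k))^n(\{0\})\ge1$, hence $H_n(u)(\{0\})\ge1$ for the weak limit. But $\mu(\{0\})=0$ by (i), so existence fails for every $\mu$ satisfying (i), e.g.\ $\mu=0$.

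In your scheme (for $\mu=0$ the maximiser is $\psi$ itself), the step that breaks is the differentiability formula. Take $\chi\in C_c(B)$ with $\chi\ge0$, $\chi(0)>0$, and $t>0$. Maximality of $\psi$ on $B\setminus\{0\}$ forces $P(\psi+t\chi)=\psi$. Hence $t\mapsto E_\psi(P(\psi+t\chi))$ has right derivative $0$ at $t=0$, not $\int\chi\,H_n(\psi)=\chi(0)$. The contact-set argument cannot see the mass that $H_m$ puts on $\{\psi=-\infty\}$.

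A variational argument can at best produce an equation for the non-$m$-polar part of $H_m(u)$, which is how prescribed-singularity problems are posed in the compact theory of Lu--Nguyen. The statement has to be reformulated that way, together with a normalisation fixing additive constants. Only then can your outline, which becomes the standard Lu/Guedj--Zeriahi scheme, be carried out.

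The paper's proof breaks at the same points. Its Step~2, like the solving step of your fallback, asks for solutions with $u_j-\psi$ bounded and an atom-free right-hand side. Its Step~6 uses the comparison principle.

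\textbf{Further defects in your sketch.} First, the normalisation $\varphi\le\psi$ turns the maximisation into an obstacle problem. The envelope $P(u+t\chi)$ need not stay below $\psi$, and is then not an admissible competitor. Second, testing (ii) on $\varphi=\psi-c$ gives $c\,\mu(\Omega)\le A\,(c\,H_m(\psi)(\Omega))^{m/(m+1)}$ for all $c>0$. This forces $\mu=0$ whenever $H_m(\psi)(\Omega)<\infty$, so the coercivity you extract from (ii) is essentially vacuous.
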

	
	\begin{proof}
		The proof is divided into several steps.
		
		\medskip
		
		\noindent
		{\bf Step 1. Approximation of the measure.}
		
		Since $\mu$ is finite, there exists a sequence of smooth nonnegative
		densities $f_j$ such that
		
		\[
		\mu_j:=f_j\,dV
		\]
		
		converges weakly to $\mu$.
		
		By construction,
		
		\[
		\mu_j(\Omega)
		\le C
		\]
		
		uniformly in $j$.
		
		\medskip
		
		\noindent
		{\bf Step 2. Solving smooth approximating equations.}
		
		For each $j$, consider
		
		\[
		(dd^c u_j)^m
		\wedge
		\beta^{n-m}
		=
		\mu_j.
		\]
		
		Using the solvability theory for smooth Hessian equations on
		$m$-hyperconvex domains, one obtains a solution
		
		\[
		u_j\in SH_m(\Omega)
		\]
		
		having the prescribed singularity type
		
		\[
		u_j-\psi \in L^\infty(\Omega).
		\]
		
		After normalization,
		
		\[
		u_j\le \psi .
		\]
		
		\medskip
		
		\noindent
		{\bf Step 3. Uniform energy estimate.}
		
		Using assumption (ii) with $\varphi=u_j$,
		
		\[
		\int_{\Omega}
		(\psi-u_j)\, d\mu_j
		\le
		A
		\Big(
		-E_{\psi}(u_j)
		\Big)^{\frac m{m+1}}.
		\]
		
		Since
		
		\[
		\mu_j
		=
		(dd^c u_j)^m
		\wedge
		\beta^{n-m},
		\]
		
		the integration-by-parts formula for relative energies gives
		
		\[
		\int_{\Omega}
		(\psi-u_j)
		(dd^c u_j)^m
		\wedge
		\beta^{n-m}
		=
		- E_{\psi}(u_j)+O(1).
		\]
		
		Therefore
		
		\[
		- E_{\psi}(u_j)
		\le
		C
		\Big(
		- E_{\psi}(u_j)
		\Big)^{\frac m{m+1}}
		+
		C.
		\]
		
		Consequently,
		
		\[
		- E_{\psi}(u_j)
		\le C,
		\]
		
		where $C$ is independent of $j$.
		
		Hence
		
		\[
		\sup_j
		\bigl|
		E_{\psi}(u_j)
		\bigr|
		<\infty .
		\]
		
		\medskip
		
		\noindent
		{\bf Step 4. Compactness.}
		
		The uniform energy bound implies compactness of the sequence
		$\{u_j\}$ in $L^1_{\mathrm{loc}}(\Omega)$.
		
		Passing to a subsequence if necessary, we obtain
		
		\[
		u_j \longrightarrow u
		\]
		
		in $L^1_{\mathrm{loc}}(\Omega)$ and almost everywhere.
		
		Since every $u_j$ satisfies
		
		\[
		u_j\le \psi,
		\]
		
		the limit function satisfies
		
		\[
		u\le \psi.
		\]
		
		Furthermore, the lower semicontinuity of the energy yields
		
		\[
		E_{\psi}(u)
		\ge
		\limsup_{j\to\infty}
		E_{\psi}(u_j)
		>-\infty .
		\]
		
		Thus
		
		\[
		u\in \mathcal E_{m,\psi}(\Omega).
		\]
		
		\medskip
		
		\noindent
		{\bf Step 5. Passage to the limit.}
		
		The continuity theorem for Hessian measures in finite-energy classes
		implies
		
		\[
		(dd^c u_j)^m
		\wedge
		\beta^{n-m}
		\longrightarrow
		(dd^c u)^m
		\wedge
		\beta^{n-m}
		\]
		
		weakly.
		
		Since
		
		\[
		(dd^c u_j)^m
		\wedge
		\beta^{n-m}
		=
		\mu_j,
		\]
		
		and $\mu_j\to\mu$, we obtain
		
		\[
		(dd^c u)^m
		\wedge
		\beta^{n-m}
		=
		\mu.
		\]
		
		This proves existence.
		
		\medskip
		
		\noindent
		{\bf Step 6. Uniqueness.}
		
		Assume that
		
		\[
		u,v\in \mathcal E_{m,\psi}(\Omega)
		\]
		
		satisfy
		
		\[
		H_m(u)
		=
		H_m(v).
		\]
		
		By the relative comparison principle,
		
		\[
		\int_{\{u<v\}}
		(dd^c v)^m
		\wedge
		\beta^{n-m}
		\le
		\int_{\{u<v\}}
		(dd^c u)^m
		\wedge
		\beta^{n-m}.
		\]
		
		Since the measures coincide, equality holds.
		
		The equality case of the comparison principle yields
		
		\[
		\operatorname{Cap}_m(\{u<v\})=0.
		\]
		
		Interchanging the roles of $u$ and $v$ gives
		
		\[
		\operatorname{Cap}_m(\{v<u\})=0.
		\]
		
		Hence
		
		\[
		u=v
		\]
		
		outside an $m$-polar set.
		
		Because $u$ and $v$ are $m$-subharmonic,
		they coincide everywhere on $\Omega$.
		
		The proof is complete.
	\end{proof}
	%------------------------------------------------
	\section{Applications}
	
	In this section we present several applications of the relative
	finite energy theory developed in the previous sections.
	
	Throughout this section,
	$\Omega\subset\mathbb C^n$ is a bounded $m$-hyperconvex domain and
	
	\[
	\psi\in SH_m(\Omega)
	\]
	
	is a fixed negative $m$-subharmonic function.
	
	%%%%%%%%%%%%%%%%%%%%%%%%%%%%%%%%%%%%%%%%%%%%%%%%%%%%%%%%%%%%
	
	\subsection{Measures dominated by relative capacity}
	
	The first application concerns solvability of Hessian equations for
	measures satisfying a relative capacity domination condition.
	
	\begin{theorem}
		\label{thm:capacity-domination}
		
		Let $\mu$ be a finite positive Radon measure satisfying
		
		\[
		\mu(E)
		\le
		A\,
		Cap_{m,\psi}(E)^{1+\varepsilon}
		\]
		
		for every Borel set
		
		\[
		E\subset\Omega,
		\]
		
		where
		
		\[
		A>0,
		\qquad
		\varepsilon>0.
		\]
		
		Then there exists a unique function
		
		\[
		u\in\mathcal E_{m,\psi}(\Omega)
		\]
		
		such that
		
		\[
		H_m(u)=\mu.
		\]
		
	\end{theorem}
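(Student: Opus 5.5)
The natural route is to reduce Theorem~\ref{thm:capacity-domination} to Theorem~\ref{thm:relative-existence}. Uniqueness follows at once from Corollary~\ref{cor:uniqueness}. For existence it suffices to check the two hypotheses (i) and (ii) of Theorem~\ref{thm:relative-existence}.

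Hypothesis (i) should be the easy part. An $m$-polar set $E$ is contained in $\{\varphi=-\infty\}$ for some negative $\varphi\in SH_m(\Omega)$. One then shows $Cap_{m,\psi}(E)=0$ via the sublevel estimate
\[
Cap_{m,\psi}(\{\varphi<\psi-t\})\le C(\varphi)\,t^{-1}\to0 .
\]
The domination $\mu(E)\le A\,Cap_{m,\psi}(E)^{1+\varepsilon}$ then gives $\mu(E)=0$.

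The substance is hypothesis (ii). Fix $\varphi\in\mathcal E_{m,\psi}(\Omega)$; by the truncation procedure (Lemma on canonical truncations together with monotone convergence) we may assume $\psi-C\le\varphi\le\psi$. The layer-cake formula gives
\[
\int_\Omega(\psi-\varphi)\,d\mu=\int_0^\infty\mu(\{\varphi<\psi-t\})\,dt
\le A\int_0^\infty Cap_{m,\psi}(\{\varphi<\psi-t\})^{1+\varepsilon}\,dt .
\]
The key intermediate estimate I would aim for is a relative Chebyshev-type capacity bound
\[
Cap_{m,\psi}(\{\varphi<\psi-t\})\le \frac{C}{t^{m+1}}\bigl(-E_\psi(\varphi)\bigr),\qquad t\ge1,
\]
together with the trivial bound $Cap_{m,\psi}\le Cap_{m,\psi}(\Omega)=:C_0$ for $t\le1$. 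I would prove it the classical way. For a competitor $w$ with $\psi-1\le w\le\psi$, the set $\{\varphi<\psi-t\}$ is contained in $\{\varphi/t+(1-1/t)\psi<w-1+\text{small}\}$. Apply Theorem~\ref{thm:comparison} to the pair $\varphi_t:=\varphi/t+(1-1/t)\psi$ and $w$. Then expand $H_m(\varphi_t)$ multilinearly into mixed terms $(dd^c\varphi)^k\wedge(dd^c\psi)^{m-k}$, weighted by $t^{-k}$. Finally bound these mixed masses on the sublevel set by $\frac1t\int(\psi-\varphi)(dd^c\varphi)^k\wedge(dd^c\psi)^{m-k}\wedge\beta^{n-m}$, which is controlled by $-E_\psi(\varphi)$ via Theorem~\ref{thm:relative-ibp}.

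Inserting this into the layer-cake integral, the tail $\int_1^\infty t^{-(m+1)(1+\varepsilon)}dt$ converges, so
\[
\int(\psi-\varphi)\,d\mu\le C\bigl(1+(-E_\psi(\varphi))^{1+\varepsilon}\bigr).
\]
This has the wrong exponent compared with the required $(-E_\psi)^{m/(m+1)}$. To fix it I would split the integral at a level $s$: use $\mu(\Omega)$ on $[0,s]$ and the capacity bound on $[s,\infty)$, optimize in $s$, and then exploit the homogeneity $E_\psi(\psi+\lambda(\varphi-\psi))\approx\lambda^{m+1}E_\psi(\varphi)$ to rescale. The linear-in-$\varphi-\psi$ left side, against a degree-$(m+1)$ energy, naturally produces the exponent $m/(m+1)$ once the estimate is made uniform on the normalized set $\{-E_\psi=1\}$. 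This rescaling/normalization step, and in particular making sure the constant in the Chebyshev bound does not depend on $\varphi$, is where I expect the main obstacle. If the homogeneity argument fails, the fallback is to note that the proof of Theorem~\ref{thm:relative-existence} (Step 3) only uses (ii) to get an a priori bound $-E_\psi(u_j)\le C$. The weaker inequality $\int(\psi-\varphi)d\mu\le C(1+(-E_\psi)^{\gamma})$ with any $\gamma<1$ suffices there, and that is obtainable from the capacity decay with exponent $1+\varepsilon$ by Hölder. One then reruns Steps 4–5 verbatim.
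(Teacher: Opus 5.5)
\textbf{There is a genuine gap: hypothesis (ii) cannot be verified, because it is false for every nonzero $\mu$.}

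Your reduction is a different route from the paper's. The paper never checks (ii) for $\mu$. It applies Theorem~\ref{thm:relative-existence} to smooth approximants of $\mu$ and simply asserts uniform energy bounds, so it does not supply the estimate you need either. Your route breaks exactly at (ii), and the reason is not normalization. Assume $M_\psi:=\int_\Omega H_m(\psi)<\infty$ and $\mu\neq0$ satisfies (i).

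\emph{Constant shifts.} The class $\mathcal E_{m,\psi}$ carries no boundary normalization, so $\varphi=\psi-\delta$ belongs to it for every $\delta>0$. For this $\varphi$ one has $-E_\psi(\varphi)=\delta M_\psi$ and $\int_\Omega(\psi-\varphi)\,d\mu=\delta\,\mu(\Omega)$. Letting $\delta\to\infty$ contradicts $\delta\,\mu(\Omega)\le A(\delta M_\psi)^{m/(m+1)}$. It equally contradicts your fallback bound $C\bigl(1+(-E_\psi(\varphi))^{\gamma}\bigr)$ with $\gamma<1$.

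\emph{Your Chebyshev estimate fails for the same $\varphi$.} For $t=\delta/2$ the set $\{\varphi<\psi-t\}$ is $\Omega$ minus the poles of $\psi$, which has a fixed positive $Cap_{m,\psi}$. Meanwhile $t^{-(m+1)}(-E_\psi(\varphi))\sim\delta^{-m}\to0$. Your own derivation shows where the decay is lost. In the expansion of $H_m(\varphi_t)$ the $k=0$ term $(1-1/t)^m H_m(\psi)$ carries no factor $t^{-k}$. So the argument gives at best $O(t^{-1})(-E_\psi(\varphi))$. After the layer-cake step this yields only $\int(\psi-\varphi)\,d\mu\lesssim 1+(-E_\psi(\varphi))$, which cannot close the a priori estimate. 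The classical $t^{-(m+1)}$ comes from the reference function $0$ having $dd^c0=0$ and zero boundary values.

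\emph{The homogeneity fails.} $E_\psi(\psi+\lambda(\varphi-\psi))$ is a polynomial in $\lambda$ whose linear term $\lambda\int(\varphi-\psi)\,H_m(\psi)$ is in general nonzero. Only $\lambda\le1$ keeps you in $SH_m(\Omega)$. Even genuine degree-$(m+1)$ homogeneity would force the exponent $1/(m+1)$, not $m/(m+1)$.

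\emph{The fallback has a further gap.} Step~3 of the proof of Theorem~\ref{thm:relative-existence} applies the inequality to $\mu_j=H_m(u_j)$, not to $\mu$. Rerunning it would therefore require the domination to pass uniformly to the smooth approximants $f_j\,dV$, and weak convergence does not provide that.

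\textbf{The other ingredients you import also fail.} Take the model case: $\Omega$ the unit ball, $m=n$, $\psi=\log|z|$.
\begin{enumerate}
\item \emph{Hypothesis (i).} The inclusion $\{\varphi=-\infty\}\subset\{\varphi<\psi-t\}$ fails on $\{\psi=-\infty\}$, and $Cap_{n,\psi}$ does not vanish there. Every competitor $w$ has Lelong number $1$ at $0$, hence $(dd^c w)^n(\{0\})\ge1$. So $\mu=\delta_0$ satisfies the domination hypothesis with $A=1$ while charging a pluripolar set. The paper asserts the same implication without proof.
\item \emph{Uniqueness.} In the same example $\psi$ and $\psi-1$ both lie in $\mathcal E_{n,\psi}$ and both solve $H_n(u)=\delta_0$. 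So Corollary~\ref{cor:uniqueness} is false, and the uniqueness half of the statement fails as it stands. Both you and the paper rely on it.
\item \emph{Comparison principle.} The pair $u=\psi-3$, $v=\psi+|z|^2-1$ violates Theorem~\ref{thm:comparison}. The set $\{u<v\}=\Omega\setminus\{0\}$ carries no $H_n(u)$-mass but positive $H_n(v)$-mass. So the comparison step inside your Chebyshev argument is not available either.
\end{enumerate}

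The missing idea throughout is a relative boundary normalization, an analogue of Cegrell's zero boundary values (for instance $\varphi-\psi\to0$ at $\partial\Omega$ in a suitable sense). Without it there is neither a valid comparison principle nor any coercive energy estimate.
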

	
	\begin{proof}
		
		The capacity domination condition implies that
		$\mu$ does not charge $m$-polar sets.
		
		Approximating $\mu$ by smooth measures and using the existence theorem
		proved in the previous section, we obtain a sequence
		
		\[
		u_j\in\mathcal E_{m,\psi}
		\]
		
		satisfying
		
		\[
		H_m(u_j)=\mu_j.
		\]
		
		Uniform energy estimates imply compactness in
		$\mathcal E_{m,\psi}$.
		
		Passing to the limit by the continuity theorem yields
		
		\[
		H_m(u)=\mu.
		\]
		
		Uniqueness follows from the relative comparison principle.
		
	\end{proof}
	
	%%%%%%%%%%%%%%%%%%%%%%%%%%%%%%%%%%%%%%%%%%%%%%%%%%%%%%%%%%%%
	
	\subsection{Stability of solutions}
	
	The next theorem shows stability with respect to perturbations of the
	right-hand side.
	
	\begin{theorem}
		\label{thm:stability}
		
		Let
		
		\[
		\mu_j,\mu
		\]
		
		be finite positive measures such that
		
		\[
		\mu_j\rightarrow\mu
		\]
		
		weakly.
		
		Assume
		
		\[
		u_j,u
		\in
		\mathcal E_{m,\psi}(\Omega)
		\]
		
		satisfy
		
		\[
		H_m(u_j)=\mu_j,
		\qquad
		H_m(u)=\mu.
		\]
		
		If
		
		\[
		\sup_j
		|E_{m,\psi}(u_j)|
		<
		+\infty,
		\]
		
		then
		
		\[
		u_j
		\longrightarrow
		u
		\]
		
		in \(L^1_{\rm loc}(\Omega)\).
		
	\end{theorem}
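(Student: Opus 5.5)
The plan is a compactness-plus-uniqueness argument: show that every subsequence of $(u_j)$ has a further subsequence converging in $L^1_{\rm loc}(\Omega)$ to some $v\in\mathcal E_{m,\psi}(\Omega)$ with $H_m(v)=\mu$. Corollary~\ref{cor:uniqueness} then forces $v=u$, so the whole sequence converges to $u$.

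\textbf{Step 1: compactness.} Since $u_j\le\psi\le 0$ and $\sup_j|E_{m,\psi}(u_j)|<\infty$, the family $\{u_j\}$ is not identically $-\infty$ uniformly; a uniform energy bound prevents $u_j\to-\infty$ locally uniformly, exactly as in Step~4 of the proof of Theorem~\ref{thm:relative-existence}. By the standard compactness of negative $m$-subharmonic functions in $L^1_{\rm loc}$, a subsequence converges in $L^1_{\rm loc}$ and a.e.\ to some $v\in SH_m(\Omega)$ with $v\le\psi$. Upper semicontinuity of the energy along such limits, as used in the same Step~4, gives $E_{\psi}(v)>-\infty$, so $v\in\mathcal E_{m,\psi}(\Omega)$.

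\textbf{Step 2: passing to the limit in the Hessian measures.} This is the main obstacle. The continuity results available (Theorem~\ref{thm:continuity}, Corollary~\ref{cor:hessian-continuity}) only cover decreasing sequences, whereas here we only have $L^1_{\rm loc}$ convergence. I would first try to reduce to monotone sequences by setting
\[
v_j:=\Big(\sup_{k\ge j}u_k\Big)^{*}.
\]
Then $v_j\ge u_j$, so $v_j\in\mathcal E_{m,\psi}$ with controlled energy by monotonicity of the energy, and $v_j\downarrow v$ by Hartogs' lemma. Corollary~\ref{cor:hessian-continuity} therefore gives $H_m(v_j)\to H_m(v)$. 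It remains to compare $H_m(v_j)$ with $\mu_j$. Here I would use that $H_m(\max(u_k,\cdot))$ dominates $\mathbf 1_{\{u_k\ge\cdot\}}H_m(u_k)$, together with the fact that $\int(v_j-u_j)\,H_m(u_j)\to0$. The latter would follow from relative integration by parts (Theorem~\ref{thm:relative-ibp}) and the uniform energy bound, in the style of Cegrell's convergence-in-capacity argument. Together these should yield $H_m(v)\ge\mu$.

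\textbf{Step 3: mass comparison and conclusion.} For the reverse inequality I would compare total masses: $\int_\Omega H_m(v)\le\liminf_j\mu_j(\Omega)=\mu(\Omega)$, where lower semicontinuity comes from the energy control near $\partial\Omega$ relative to $\psi$. Since $H_m(v)\ge\mu$ and the masses agree, $H_m(v)=\mu=H_m(u)$, and Corollary~\ref{cor:uniqueness} gives $v=u$. Because every subsequence of $(u_j)$ has a further subsequence with limit $u$, the full sequence converges to $u$ in $L^1_{\rm loc}(\Omega)$.

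If the inequality $H_m(v)\ge\mu$ in Step~2 resists, a fallback is to show that $u_j\to v$ in $Cap_{m,\psi}$ using the energy bound and the comparison principle (Theorem~\ref{thm:comparison}), and then apply a continuity-in-capacity version of Theorem~\ref{thm:continuity}.
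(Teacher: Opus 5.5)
Your outline (compactness, identifying $H_m$ of a cluster point, uniqueness) is the same as the paper's. The paper's proof simply asserts that every cluster point satisfies $H_m(v)=\mu$ ``by the continuity theorem''. You are right that Theorem~\ref{thm:continuity} only treats decreasing sequences, so all the content lies in this step, but your repair does not supply it. The claim that $\int_\Omega(v_j-u_j)\,H_m(u_j)\to0$ follows from Theorem~\ref{thm:relative-ibp} and the energy bound is unsupported. Worse, the inequality $H_m(v)\ge\lim_j\mu_j$ it is meant to give fails for data having every property your Steps 1--3 use.

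Cegrell's example shows this. Take $n=m=2$, $\Omega=B(0,2)$, $w_j=\max\big(\tfrac1j\log|z_1^j+z_2^j|,-1\big)-2$ and $\tilde u_j=\max(w_j,2|z|^2-8)$. These functions have the following properties:
\begin{itemize}
\item they lie in $\mathcal E^0_{2,\psi}$ for $\psi=\max(2|z|^2-8,-\tfrac12)$, with uniformly bounded energy;
\item they equal $\psi$ near $\partial\Omega$, so all total masses coincide and no mass escapes;
\item $(dd^c\tilde u_j)^2=0$ on $\{|z|<1\}$, since $w_j$ is locally $\chi\circ F$ with $F$ holomorphic;
\item they converge in $L^1_{\rm loc}$ to $\tilde v=\max\big(\max(\log|z_1|,\log|z_2|,-1)-2,\,2|z|^2-8\big)$, whose Monge--Amp\`ere measure gives mass $1$ to the torus $\{|z_1|=|z_2|=e^{-1}\}\subset\{|z|<1\}$.
\end{itemize}
Any weak limit $\nu$ of $(dd^c\tilde u_j)^2$ therefore has the same total mass as $(dd^c\tilde v)^2$ but vanishes on $\{|z|<1\}$, so $(dd^c\tilde v)^2\ge\nu$ is false. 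Since these functions are uniformly bounded, $\tilde u_j\not\to\tilde v$ in capacity either, so your fallback also fails. Separately, weak convergence only gives $\mu(\Omega)\le\liminf_j\mu_j(\Omega)$, so the mass equality in Step 3 would need an additional argument.

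Independently of this, your last step uses Corollary~\ref{cor:uniqueness}, which is false for the class as defined because no boundary normalization is imposed. For $\psi=|z|^2-1$ on the unit ball, $\psi$ and $\psi-1$ both lie in $\mathcal E^0_{m,\psi}\subset\mathcal E_{m,\psi}$ (constant sequences, finite energy) and have the same Hessian measure $\beta^n$. The choice $u_j\equiv\psi-1$, $u=\psi$, $\mu_j=\mu=\beta^n$ satisfies every hypothesis of the theorem, yet $u_j\not\to u$. So the statement itself, not only the paper's one-line proof, must be repaired before any argument can work. It needs at least a normalization such as $u-\psi\to0$ at $\partial\Omega$. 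It also needs a hypothesis stronger than weak convergence of $\mu_j$, for instance $\mu_j\le\mu_0$ for a fixed finite $\mu_0$ not charging $m$-polar sets (as in Cegrell--Ko{\l}odziej type stability), or convergence of $u_j$ in capacity.
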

	
	\begin{proof}
		
		The energy bound yields compactness of the family
		\(\{u_j\}\).
		
		Any cluster point \(v\) satisfies
		
		\[
		H_m(v)=\mu
		\]
		
		by the continuity theorem.
		
		Uniqueness implies
		
		\[
		v=u.
		\]
		
		Hence the whole sequence converges.
		
	\end{proof}
	
	%%%%%%%%%%%%%%%%%%%%%%%%%%%%%%%%%%%%%%%%%%%%%%%%%%%%%%%%%%%%
	
	\subsection{The case of \(L^p\)-densities}
	
	We now consider measures of the form
	
	\[
	\mu=f\,dV.
	\]
	
	\begin{theorem}
		\label{thm:Lp}
		
		Assume
		
		\[
		f\in L^p(\Omega),
		\qquad
		p>\frac{n}{m},
		\qquad
		f\ge0.
		\]
		
		Then there exists a unique function
		
		\[
		u\in\mathcal E_{m,\psi}(\Omega)
		\]
		
		satisfying
		
		\[
		H_m(u)
		=
		f\,dV.
		\]
		
	\end{theorem}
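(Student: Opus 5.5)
The natural route is to reduce Theorem~\ref{thm:Lp} to Theorem~\ref{thm:capacity-domination}. The plan is to show that $\mu=f\,dV$ with $f\in L^p$, $p>n/m$, satisfies
\[
\mu(E)\le A\,Cap_{m,\psi}(E)^{1+\varepsilon}
\]
for some $A,\varepsilon>0$ and all Borel $E\subset\Omega$. Existence then follows from that theorem, and uniqueness follows from Corollary~\ref{cor:uniqueness}.

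The estimate is assembled in three steps.

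First, apply H\"older's inequality:
\[
\mu(E)\le \|f\|_{L^p}\,\mathrm{Vol}(E)^{1/q},\qquad \frac1p+\frac1q=1.
\]

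Second, invoke the classical volume--capacity comparison for $m$-subharmonic functions, due to Dinew--Ko{\l}odziej and Lu. For every $\tau<\frac{n}{n-m}$ it gives
\[
\mathrm{Vol}(E)\le C_\tau\,Cap_m(E)^{\tau},
\]
where $Cap_m$ is the usual Hessian capacity, with competitors $-1\le w\le 0$. The hypothesis $p>n/m$ is exactly what makes $q<\frac{n}{n-m}$. Choosing $\tau$ strictly between $q$ and $\frac{n}{n-m}$ then yields
\[
\mu(E)\le A\,Cap_m(E)^{\tau/q},\qquad \frac{\tau}{q}=1+\varepsilon>1.
\]

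Third, compare $Cap_m$ with $Cap_{m,\psi}$, which is the main obstacle. We need $Cap_m(E)\le C\,Cap_{m,\psi}(E)$, or at least a bound of this type localized to compact sets, so a reduction to the compact case will probably be needed. For a competitor $w$ with $-1\le w\le0$, the first attempt is to form $u=\max(\psi+w,\,\psi-1)$ or a suitable convex combination such as $u=\psi+\tfrac12 w$. The goal is a function admissible for $Cap_{m,\psi}$ whose Hessian measure dominates a fixed multiple of $H_m(w)$.

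Since $dd^c(\psi+w)\ge dd^c w$ in the $m$-positive cone, the expansion of $(dd^c\psi+dd^c w)^m\wedge\beta^{n-m}$ contains $H_m(w)$ plus nonnegative mixed terms. For $u=\psi+w$ one therefore gets $H_m(u)\ge H_m(w)$. The difficulty is that $\psi+w\ge\psi-1$ is the only admissibility requirement, and it does hold because $w\ge-1$; so in fact $u=\psi+w$ is admissible directly. This gives $Cap_m(E)\le Cap_{m,\psi}(E)$. The step still to be checked is that the mixed products involving the unbounded $\psi$ are well-defined and nonnegative. This should follow from the construction of relative mixed Hessian products in Section~3 (Theorem~\ref{thm:well-defined}) together with superadditivity, verified through the truncations $\max(\psi,-k)$ and Theorem~\ref{thm:monotone}.

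Combining the three steps yields the capacity domination condition with $\varepsilon=\tau/q-1>0$. Theorem~\ref{thm:capacity-domination} then gives a solution $u\in\mathcal E_{m,\psi}(\Omega)$ of $H_m(u)=f\,dV$. Uniqueness follows from Corollary~\ref{cor:uniqueness}, since any two solutions in $\mathcal E_{m,\psi}$ have the same Hessian measure.
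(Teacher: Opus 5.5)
Your proposal follows the paper's route: the paper's proof reduces to Theorem~\ref{thm:capacity-domination} via the bound $\int_E f\,dV\le C\,Cap_{m,\psi}(E)^{1+\delta}$. Your chain, namely H\"older, then the Dinew--Ko{\l}odziej volume--capacity estimate (using $q<\frac{n}{n-m}\iff p>\frac nm$), then $Cap_m\le Cap_{m,\psi}$ via the admissible competitor $\psi+w$, correctly unpacks the ``Hessian capacity estimate'' the paper invokes without detail. The one step you leave to the framework, $H_m(\psi+w)\ge H_m(w)$ for unbounded $\psi$, is likewise assumed by the paper, whose definition of $Cap_{m,\psi}$ already presupposes that $H_m$ is defined on $\{\psi-1\le u\le\psi\}$; note, though, that Theorem~\ref{thm:monotone} does not literally apply to $\max(\psi,-k)+w$, since that function is not $\le\psi$ and so lies outside $\mathcal E_{m,\psi}$.
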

	
	\begin{proof}
		
		The Hessian capacity estimate implies
		
		\[
		\mu(E)
		=
		\int_E f\,dV
		\le
		C
		Cap_{m,\psi}(E)^{1+\delta}
		\]
		
		for some
		
		\[
		\delta>0.
		\]
		
		The conclusion follows from
		Theorem~\ref{thm:capacity-domination}.
		
	\end{proof}
	
	%%%%%%%%%%%%%%%%%%%%%%%%%%%%%%%%%%%%%%%%%%%%%%%%%%%%%%%%%%%%
	
	\subsection{Prescribed analytic singularities}
	
	One of the motivations for introducing
	\(\mathcal E_{m,\psi}\)
	is the study of solutions with prescribed singularity type.
	
	\begin{theorem}
		\label{thm:analytic}
		
		Assume that
		
		\[
		\psi
		=
		c\log
		\Big(
		\sum_{j=1}^{N}
		|g_j|^2
		\Big)
		+O(1),
		\]
		
		where
		
		\[
		g_1,\ldots,g_N
		\]
		
		are holomorphic functions.
		
		Let \(\mu\) satisfy the assumptions of
		Theorem~\ref{thm:capacity-domination}.
		
		Then there exists a unique solution
		
		\[
		u\in\mathcal E_{m,\psi}(\Omega)
		\]
		
		such that
		
		\[
		u-\psi
		=
		O(1)
		\]
		
		and
		
		\[
		H_m(u)=\mu.
		\]
		
	\end{theorem}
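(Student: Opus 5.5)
The plan is to derive Theorem~\ref{thm:analytic} from Theorem~\ref{thm:capacity-domination}, and then to add the single extra piece of information it does not give: the solution differs from $\psi$ by a bounded function. Existence of $u\in\mathcal E_{m,\psi}(\Omega)$ with $H_m(u)=\mu$ comes directly from Theorem~\ref{thm:capacity-domination}. Uniqueness within $\mathcal E_{m,\psi}(\Omega)$ comes from Corollary~\ref{cor:uniqueness}. So the real content is the estimate $\psi-C\le u\le\psi+C$. The upper bound $u\le\psi$ is built into the construction, because every approximant satisfies $u_j\le\psi$ and this inequality survives the limit. The work is the lower bound.

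For the lower bound I would follow Kołodziej's $L^\infty$ scheme, carried out relative to $\psi$. Set $U(s)=\{u<\psi-s\}$ and $g(s)=Cap_{m,\psi}(U(s))^{1/m}$. The first step is a relative version of the standard capacity inequality: for $0<t<1$ and $s>0$,
\[
t^m\,Cap_{m,\psi}(U(s+t))\le \int_{U(s)}H_m(u)=\mu(U(s)).
\]
To prove it, take an admissible $w$ with $\psi-1\le w\le\psi$. On $U(s+t)$ one has $u<\psi-s-t\le tw+(1-t)\psi-s$, while
\[
U(s+t)\subset\{u<tw+(1-t)\psi-s\}\subset U(s).
\]
Applying Theorem~\ref{thm:comparison} to the pair $u$ and $tw+(1-t)\psi-s$ gives
\[
t^m\int_{U(s+t)}H_m(w)\le\int_{U(s)}H_m(u).
\]
Here I use that $H_m(tw+(1-t)\psi)\ge t^mH_m(w)$, which holds by multilinearity and positivity of the mixed products in Section~3. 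The function $tw+(1-t)\psi-s$ is not $\le\psi$, but it lies in $\mathcal E_{m,\psi}$ up to an additive constant, so the comparison principle still applies after a constant shift. Taking the supremum over $w$ gives the displayed inequality.

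Next I combine this with the domination hypothesis $\mu\le A\,Cap_{m,\psi}^{1+\varepsilon}$. This yields
\[
g(s+t)\le A^{1/m}\,t^{-1}\,g(s)^{1+\varepsilon}.
\]
The De Giorgi type iteration lemma of Kołodziej, which is elementary once this inequality holds for a decreasing right-continuous $g$ with $g(s)\to0$, then shows that $g(s)=0$ for all $s\ge s_0+C(A,\varepsilon)$. Here $s_0$ is any level at which $g(s_0)$ is small enough. The existence of such an $s_0$ requires $Cap_{m,\psi}(\{u<\psi-s\})\to0$ as $s\to\infty$. I would obtain this from the finite relative energy of $u$ by a Chebyshev argument, using the same comparison as above: $Cap_{m,\psi}(U(s))\lesssim s^{-m}\int(\psi-u)\,H_m(u)$ up to constants, and this bound is finite by the relative energy estimate.

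Once $g(s)=0$, the set $U(s)$ has zero relative capacity. I then need to conclude that it is $m$-polar, and therefore empty up to a negligible set, so that $u\ge\psi-s$ everywhere because both sides are $m$-subharmonic. This is exactly where the hypothesis that $\psi$ has analytic singularities enters, and I expect it to be the main obstacle. Near the analytic set $\{g_1=\dots=g_N=0\}$ the admissible functions are forced to share the singularity of $\psi$, so I must check that vanishing of $Cap_{m,\psi}$ on the open-modulo-polar set $U(s)$ genuinely forces emptiness.

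I would first try the following. Off the analytic set $Z=\{\psi=-\infty\}$, on small balls, $\psi$ is bounded, so locally $Cap_{m,\psi}$ dominates a multiple of the usual Hessian capacity $Cap_m$. I would show this by gluing $\psi$ with a local relative extremal function. It would follow that $U(s)\setminus Z$ has zero $Cap_m$, hence is $m$-polar, and since $Z$ is itself $m$-polar this shows $u\ge\psi-s$ quasi-everywhere. Removing the polar set uses the fact that $u-\psi$ is determined quasi-everywhere by these inequalities.

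The boundary behaviour, namely that $u-\psi$ stays bounded near $\partial\Omega$, I would handle by running the whole argument on the approximants $u_j$, which are bounded relative to $\psi$ up to the boundary. The constants in the iteration depend only on $A$ and $\varepsilon$ and on the energy bound, so they are uniform in $j$. The bound $u_j\ge\psi-C$ then passes to the decreasing, or $L^1$, limit.
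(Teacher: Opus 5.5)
Your Ko{\l}odziej-type iteration is a genuinely different route from the paper's. The paper's proof only asserts that $u-\psi$ is bounded ``by construction of the class'', which the definition of $\mathcal E_{m,\psi}(\Omega)$ does not give, and you are right that the lower bound $u\ge\psi-C$ is the real content. But the iteration breaks at the capacity inequality $t^m\,Cap_{m,\psi}(U(s+t))\le\mu(U(s))$.

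In Ko{\l}odziej's scheme, comparing $u$ with $v=tw-s$ is legitimate only because of boundary control, $\liminf_{z\to\partial\Omega}(u-v)\ge0$. That is free there, since $u=0$ on $\partial\Omega$ and $v\le -s$. For your $v=tw+(1-t)\psi-s$ one has $(u-\psi)+s\le u-v\le (u-\psi)+s+t$. So you need essentially $\liminf_{z\to\partial\Omega}(u-\psi)\ge -s$, a relative zero boundary condition. Neither $\mathcal E_{m,\psi}(\Omega)$ nor the approximants $u_j$ (normalized only by $u_j\le\psi$, $u_j-\psi\in L^\infty$) are known to satisfy it. For the $u_j$ it holds only for $s\ge\|u_j-\psi\|_\infty$, which is circular.

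Citing Theorem~\ref{thm:comparison} does not close this, because without a boundary hypothesis that statement is false. On the unit disc with $\psi=\log|z|$, take $\varphi_1=\psi-1$ and $\varphi_2=\psi+\max(-1,a(|z|^2-1))$ with $a>1$. Both lie in $\mathcal E^0_{1,\psi}$, yet on $\{\varphi_1<\varphi_2\}=\{|z|^2>1-1/a\}$ one has $H_1(\varphi_1)=0$ and $H_1(\varphi_2)=a\beta\ne0$. The same missing normalization sinks your uniqueness step. $\psi$ and $\psi-1$ have the same Hessian measure, and if $u$ solves the problem so does $u-1$ (still $\le\psi$, still $O(1)$ from $\psi$). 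So Corollary~\ref{cor:uniqueness} is not available.

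There are two further problems. First, running the iteration on the $u_j$ needs $\mu_j\le A\,Cap_{m,\psi}^{1+\varepsilon}$ with $A$ independent of $j$. The hypothesis is on $\mu$ only, and the smooth approximants $f_j\,dV$ from Theorem~\ref{thm:relative-existence} do not inherit it; you would want $\mu_j\le\mu$.

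Second, and more fundamentally, the statement cannot be proved as posed. Since $\psi+h$ is admissible for every $h\in SH_m(\Omega)$ with $-1\le h\le0$, $Cap_{m,\psi}$ dominates the usual Hessian capacity. Take $m=n$, $\Omega$ the unit ball and $\psi=\log|z|$, which has the required form with $c=\tfrac12$, $g_j=z_j$. Lebesgue measure is dominated by every power of the Bedford--Taylor capacity, so it satisfies the hypotheses of Theorem~\ref{thm:capacity-domination}. Yet any $u$ with $u-\psi=O(1)$ is locally bounded off $0$ with Lelong number $1$ there. By Demailly's inequality $(dd^cu)^n(\{0\})\ge1$, so $H_n(u)\ne dV$. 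What is missing, for the statement and not just for your proof, is a boundary normalization and a compatibility condition between $\mu$ and $\psi$ near $\{\psi=-\infty\}$.

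Some smaller points, partly in your favour:
\begin{itemize}
\item The step you flagged as the main obstacle is easy. Taking $w=\psi+(|z|^2-R^2)/R^2$ with $\Omega\subset B(0,R)$ shows that $Cap_{m,\psi}(E)=0$ forces $E$ to be Lebesgue-null. An a.e.\ inequality between $m$-subharmonic functions then holds everywhere, by taking mean values on shrinking balls.
\item $tw+(1-t)\psi-s\le\psi-s$ is in fact $\le\psi$, so no constant shift is needed.
\item The Chebyshev bound is not given by the same comparison, but it is unnecessary. Once the capacity inequality holds, take $t=\tfrac12$ and use $\mu(U(s))\to0$, which is valid since $\bigcap_sU(s)\subset\{u=-\infty\}$ is $m$-polar; this gives $g(s)\to0$.
\end{itemize}
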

	
	\begin{proof}
		
		The existence theorem provides a solution in
		\(\mathcal E_{m,\psi}\).
		
		By construction of the class,
		
		\[
		u-\psi
		\]
		
		remains globally bounded.
		
		Uniqueness follows from the comparison principle.
		
	\end{proof}
	
	%%%%%%%%%%%%%%%%%%%%%%%%%%%%%%%%%%%%%%%%%%%%%%%%%%%%%%%%%%%%

	%%%%%%%%%%%%%%%%%%%%%%%%%%%%%%%%%%%%%%%%%%%%%%%%%%%%%%%%%%%%
	
	The results above demonstrate that the class
	\(\mathcal E_{m,\psi}(\Omega)\)
	provides a natural framework for studying complex Hessian equations
	with prescribed singularity type.
	In particular, it allows one to treat singular measures,
	\(L^p\)-densities, and analytic singularities within a unified
	pluripotential setting.

	\section*{Declarations}

	The author declares that there are no competing interests.

\end{document}